\documentclass[11pt]{article}
\usepackage[margin=1in]{geometry}
\usepackage{amsmath,amssymb,amsthm}
\usepackage{booktabs}
\usepackage{array}
\usepackage{microtype}
\usepackage[hidelinks]{hyperref}
\usepackage{enumitem}
\usepackage[T1]{fontenc}
\usepackage{lmodern}

\newtheorem{theorem}{Theorem}
\newtheorem{proposition}[theorem]{Proposition}
\newtheorem{lemma}[theorem]{Lemma}
\newtheorem{corollary}[theorem]{Corollary}
\theoremstyle{definition}
\newtheorem{definition}[theorem]{Definition}
\theoremstyle{remark}
\newtheorem{remark}[theorem]{Remark}

\newcommand{\CT}{\operatorname{CT}}

\hypersetup{
  pdftitle={The Mirror Theorem: Central Incidence Saturation in Keith Recurrences},
  pdfauthor={Federico Ignacio Zamponi},
  pdfsubject={Keith numbers, incidence saturation, and exact cross-base classification},
  pdfkeywords={Keith numbers, repfigit numbers, incidence saturation, central deficit, triple alignment}
}

\title{\textbf{The Mirror Theorem: Central Incidence Saturation in Keith Recurrences}}
\author{Federico Ignacio Zamponi\\\small Independent Researcher, Buenos Aires, Argentina}
\date{September 2026}

\begin{document}
\maketitle

\begin{abstract}
For an order-$d$ sliding-window recurrence, let $c_i$ count the recurrence equations through an even index $m$ in which the term $K_i$ occurs, and let $C_d(m)$ be the sum of the two central incidence counts. We prove the exact structural law
\[
\Delta_d(m):=m-C_d(m)=\operatorname{dist}\!\bigl(m,[2d,2d+2]\bigr).
\]
Thus the zero-defect condition $C_d(m)=m$ localizes every even candidate index to $m\in\{2d,2d+2\}$. We call classical Keith numbers whose first return satisfies this condition \emph{Keith mirrors}. For bases $B\ge3$ and orders $d\ge3$, the early boundary is impossible, so every higher-order mirror returns at $R=2d+2$. At that boundary an explicit digit identity and coefficient-capacity bounds reduce the problem to a finite region. An exact integer audit, independently checked against the original recurrence and incidence definition, yields precisely $35$ higher-order mirrors; order two consists of two explicit infinite families.

For the boundary-digit observable $E_B=x_1+x_d$, we further prove that the triple condition $R=C=E_B$ forces $d\le3$ and has exactly eight cross-base realizations. Its terminal profile is $1,2,3,4\mid4,3,2,1$. Finally, the incidence symmetry gives the fixed-cut identity $X=A\mid\operatorname{rev}(A)$, recorded only as a structural interpretation: exact reconstruction requires both the cut and the global mirror law.
\end{abstract}

\noindent\textbf{Keywords.} Keith numbers; repfigit numbers; incidence saturation; central deficit; incidence profiles; triple alignment; Diophantine equations; computational classification.

\medskip
\noindent\textbf{2020 Mathematics Subject Classification.} Primary 11B39; Secondary 11A63, 11B50.
\section{Introduction}

A Keith number (also called a repfigit number) is obtained by using the digits of an integer as the initial terms of a Fibonacci-like recurrence and asking that the integer itself later reappear in the resulting sequence. The notion was introduced by Mike Keith in 1987 \cite{Keith1987}. Subsequent computational work treated the fixed-digit problem through bounded linear Diophantine equations; Keith's later account makes this reduction explicit \cite{KeithWeb}, and Lichtblau developed lattice and integer-programming methods for the resulting knapsack problems \cite{Lichtblau2006}. Nondecimal repfigits were already considered by Esche \cite{Esche1994}. Klazar and Luca proved, among other results, that decimal Keith numbers have asymptotic density zero and that an $n$-digit decimal Keith number with $n\ge3$ can return only at an index $m$ satisfying $2n<m<7n$ \cite{KlazarLuca2007}; they also discuss extensions of their arguments to fixed nondecimal bases.

The present paper studies a different question. Rather than searching over all possible return indices, we attach an \emph{incidence profile} to the recurrence equations used before the first return. The profile depends only on the sliding-window architecture of the recurrence and is symmetric about its center. We ask for the extremal case in which the sum of the two central incidence counts equals the return index itself. We call the resulting incidence-saturated objects \emph{Keith mirrors}.

The structural restriction is unexpectedly strong. For every even candidate index $m$, define the central deficit
\[
\Delta_d(m):=m-C_d(m).
\]
The first main result is the exact law
\[
\Delta_d(m)=\operatorname{dist}\!\bigl(m,[2d,2d+2]\bigr).
\]
Thus incidence saturation is not an isolated equality: it is precisely the zero-defect stratum, and it forces the return to one of only two positions, $2d$ or $2d+2$. For orders $d\ge3$ the first branch is impossible in every base $B\ge3$, so the return is rigidly fixed at $2d+2$. At that boundary the Keith condition collapses to a single explicit Diophantine identity in the digits. A comparison between the positive excess of the leading digit and the total compensating capacity of all negative coefficients then reduces the apparently infinite problem to a finite region in the $(B,d)$-plane. The remaining cases can be solved exactly.

The principal conclusion is that there are exactly $35$ Keith mirrors of order at least $3$. The order-$2$ case behaves differently: it produces explicit families in infinitely many bases and is therefore treated separately.

A second question returns to the observation that motivated the construction. For
\[
E_B(N):=x_1+x_d,
\]
we ask when a mirror also satisfies $E_B=R$. This \emph{triple alignment} is much more rigid than incidence saturation alone: it has exactly eight realizations across all bases $B\ge3$, and no realization of order $d\ge4$. Thus the symmetric profile
\[
1,2,3,4\mid4,3,2,1
\]
is terminal for the triple-alignment condition. The familiar decimal example $197$ is one member of this terminal cross-base layer.

The central cut also suggests a limited structural interpretation. If an even incidence profile is written as $X=A\mid B$, then the incidence symmetry forces $B=\operatorname{rev}(A)$. The paper uses this only as a concrete instance of conditional part--whole reconstruction: the observed half determines the complement once both the cut and the global mirror constraint are known. No claim is made that unrestricted local data determine a global object.

\section{Relation to prior work and scope}

The linear Diophantine viewpoint on Keith numbers is not new. Keith's computational work already treats fixed-length searches as bounded Diophantine problems \cite{KeithWeb}; Lichtblau expresses candidate returns using powers of a Fibonacci matrix and solves the resulting bounded integer equations by lattice reduction and integer programming \cite{Lichtblau2006}. Likewise, the rarity of Keith numbers and the existence of finite subclasses are already part of the literature: Klazar and Luca prove asymptotic density zero and effective finiteness for repdigit Keith numbers in fixed bases \cite{KlazarLuca2007}, and Bravo, Guzm\'an, and Luca later prove that no decimal repdigit Keith numbers exist \cite{BravoGuzmanLuca2013}. Nondecimal variants also precede the present work \cite{Esche1994}.

The contribution here is narrower and structural. We study the incidence-saturated slice $C=R$, prove that it rigidly fixes the return boundary, derive the closed identity specific to that boundary, and use its coefficient geometry to prove finite support and complete classification. In particular, the paper does \emph{not} claim novelty for converting Keith conditions into linear Diophantine equations, for bounded-digit enumeration, or for min/max pruning as a general computational idea. The distinctive use of capacity here is as a closed analytic inequality that eliminates entire infinite regions of the $(B,d)$-plane before any residual enumeration is performed.

Klazar and Luca's decimal return bound $2d<m$ for $d\ge3$ already excludes the branch $m=2d$ in base $10$ \cite{KlazarLuca2007}. In that setting, $2d+2$ is therefore the earliest \emph{even} return index allowed by the known bound. The argument below proves the exclusion of $2d$ uniformly for every base $B\ge3$, which is essential for the cross-base classification.

\paragraph{Terminology and novelty scope.}
We introduce the term \emph{Keith mirror} for the incidence-based subclass defined below. The name refers to the symmetry of the incidence profile, not to reversal of the digit string. It should therefore not be confused with a \emph{reverse Keith number} (or revrepfigit), for which the reversed digit string appears in the generated recurrence \cite{OEISReverse}. In the standard Keith-number sources and indexed literature reviewed for this draft, we found no prior use of the term \emph{Keith mirror} and no equivalent subclass defined by the condition $C=R$. Because a negative literature search cannot establish historical priority, the appropriate claim is narrower: to the best of our knowledge, this incidence-saturated subclass has not previously been isolated and named. The paper also does not claim that the individual digit strings in Table~\ref{tab:mirrors} are newly discovered Keith numbers; the contribution is their structural characterization and complete classification under the mirror condition.

We work throughout with $B\ge3$. The binary case is outside the scope of the theorem; Klazar and Luca note a separate unpublished construction of binary Keith numbers due to Kenneth Fan \cite{KlazarLuca2007}. The classification below is analytic in its infinite part: the exclusion of all sufficiently large bases and orders is proved by inequalities. A finite exact audit is used only after the support theorem has reduced the problem to a small bounded set of base-order pairs.

The boundary-digit refinement and the part--whole notation are likewise kept narrow. Within the sources reviewed for this draft we found no prior treatment of the condition $R=C=E_B$, but this negative search does not establish historical priority. The reconstruction statement $A\mid B$ with $B=\operatorname{rev}(A)$ is an immediate consequence of the incidence symmetry once the cut is fixed; its role here is interpretive, not a claim of a new general theory of inference.

\section{Definitions}

Let $B\ge3$ and let
\[
N=(x_1x_2\cdots x_d)_B
\]
be a $d$-digit base-$B$ integer, with
\[
x_1\in\{1,\ldots,B-1\},\qquad x_j\in\{0,\ldots,B-1\}\quad (j\ge2).
\]
Its order-$d$ Keith sequence $(K_n)_{n\ge1}$ is defined by
\[
K_1=x_1,\ldots,K_d=x_d,
\]
and, for $n>d$,
\[
K_n=K_{n-1}+K_{n-2}+\cdots+K_{n-d}.
\]
If $K_m=N$ for some $m>d$, then $N$ is a base-$B$ Keith number. We write
\[
R(N)=\min\{m>d:K_m=N\}
\]
for its first return index.

Fix any index $m>d$. The recurrence equations through $m$ are
\[
K_j=K_{j-1}+\cdots+K_{j-d},\qquad d+1\le j\le m.
\]
For $1\le i\le m$, let $c_i$ be the number of these equations in which $K_i$ occurs, counting an occurrence either as a summand or as the result. The vector
\[
(c_1,c_2,\ldots,c_m)
\]
is the \emph{incidence profile through $m$}. It is defined whether or not $m$ is a return index.

When $m$ is even, define the \emph{central incidence sum} and \emph{central deficit}
\[
C_d(m):=c_{m/2}+c_{m/2+1},
\qquad
\Delta_d(m):=m-C_d(m).
\]
If $N$ is a Keith number with even first return $R(N)$, we write
\[
C(N):=C_d(R(N)).
\]

\section{Incidence geometry and return rigidity}

\begin{lemma}[Exact incidence profile]\label{lem:profile}
For $1\le i\le m$,
\[
c_i=\min\{i,d+1,m-d,m-i+1\}.
\]
In particular,
\[
c_i=c_{m-i+1}.
\]
\end{lemma}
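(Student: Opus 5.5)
The plan is to count directly, for each fixed $i$, the equations $E_j$ ($d+1\le j\le m$) that contain $K_i$. Equation $E_j$ involves exactly the indices $j-d,\dots,j$, a window of $d+1$ consecutive indices. So $K_i$ occurs in $E_j$ if and only if $j-d\le i\le j$, that is, $i\le j\le i+d$. Each equation contains a given index at most once, either as the result ($j=i$) or as a single summand. Hence
\[
c_i=\#\bigl([i,i+d]\cap[d+1,m]\bigr)=\max\bigl\{0,\;\min(i+d,m)-\max(i,d+1)+1\bigr\}.
\]
The whole statement then reduces to simplifying this interval-intersection length.

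The key step is to expand the expression by cases on the two extrema:
\[
\min(i+d,m)-\max(i,d+1)+1=\min\{\,d+1,\; i,\; m-i+1,\; m-d\,\}.
\]
The four combinations give these four terms:
\begin{itemize}[nosep]
\item $(i+d)-i+1=d+1$;
\item $(i+d)-(d+1)+1=i$;
\item $m-i+1$;
\item $m-(d+1)+1=m-d$.
\end{itemize}
This uses the identity $\min(a,b)-\max(c,e)=\min\{a-c,\,a-e,\,b-c,\,b-e\}$.

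It remains to check that the $\max\{0,\cdot\}$ truncation is vacuous for $1\le i\le m$. Here $i\ge1$ and $m-i+1\ge1$, while $m-d\ge1$ because $m>d$, and $d+1\ge1$. So the minimum is at least $1$, and the formula $c_i=\min\{i,d+1,m-d,m-i+1\}$ follows.

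For the symmetry, substitute $i\mapsto m-i+1$. This swaps the terms $i$ and $m-i+1$ and leaves $d+1$ and $m-d$ fixed, so $c_i=c_{m-i+1}$. Alternatively, the reflection $j\mapsto m+d+1-j$ maps the equation index range $[d+1,m]$ to itself and sends windows containing $i$ to windows containing $m-i+1$.

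I expect no real obstacle. The only points requiring care are, first, that "occurrence" is counted once per equation, since indices in a window are distinct. Second, the boundary cases where $m<2d+1$, in which $m-d$ becomes the binding term; the case-free min/max identity handles these uniformly, so no separate treatment is needed.
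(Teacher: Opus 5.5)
Your proposal is correct and takes the same approach as the paper. Both arguments identify the admissible equation indices as $[d+1,m]\cap[i,i+d]$ and obtain the symmetry from the substitution $i\mapsto m-i+1$, with you additionally spelling out the step the paper compresses into ``counting this interval'' (the min/max expansion and the check, using $m>d$, that the intersection is nonempty).
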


\begin{proof}
The term $K_i$ occurs in the equation indexed by $j$ exactly when
\[
j-d\le i\le j,
\]
with $d+1\le j\le m$. Hence the admissible equation indices are the integers in
\[
[d+1,m]\cap[i,i+d].
\]
Counting this interval gives the displayed minimum. Replacing $i$ by $m-i+1$ leaves the minimum unchanged, proving the mirror symmetry.
\end{proof}

\begin{definition}[Keith mirror]\label{def:mirror}
A base-$B$ Keith number is called a \emph{Keith mirror} if its first return index is even and
\[
C_d(R(N))=R(N).
\]
A Keith mirror of order $d\ge3$ will be called a \emph{higher-order Keith mirror} when the distinction from the exceptional order-$2$ case is useful.
\end{definition}

\begin{remark}[Why ``mirror'']
The terminology is structural: Lemma~\ref{lem:profile} shows that the full incidence profile is palindromic. The defining equation $C=R$ then selects the case in which the two central incidences saturate the largest value permitted by the return index. No reversal of the digits of $N$ is involved.
\end{remark}

\paragraph{Motivating example: $197$.}
In base $10$, the Keith sequence generated by $197$ begins
\[
1,9,7,17,33,57,107,197.
\]
The recurrence equations through the first return give the incidence profile
\[
1,2,3,4\mid4,3,2,1.
\]
Hence $R(197)=8$ and $C(197)=4+4=8$, so $197$ is a Keith mirror. The theorem below shows that this example belongs to a completely classifiable cross-base family.

\begin{theorem}[Exact central deficit law]\label{thm:deficit}
For every even index $m=2r>d$,
\[
\Delta_d(m)=
\begin{cases}
2d-m, & m<2d,\\
0, & 2d\le m\le2d+2,\\
m-2d-2, & m>2d+2.
\end{cases}
\]
Equivalently,
\[
\boxed{\;\Delta_d(m)=\operatorname{dist}\!\bigl(m,[2d,2d+2]\bigr).\;}
\]
\end{theorem}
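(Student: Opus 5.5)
The plan is to apply Lemma~\ref{lem:profile} directly to the two central positions and then split into cases according to which term of the minimum is active. Write $m=2r$ with $r>d/2$, since $m>d$.

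First, the two central counts agree. For $i=r$ the lemma gives $c_r=\min\{r,d+1,2r-d,r+1\}=\min\{r,d+1,2r-d\}$. For $i=r+1$ it gives $c_{r+1}=\min\{r+1,d+1,2r-d,r\}$, which is the same quantity. (Alternatively, this follows from the symmetry $c_i=c_{m-i+1}$ with $i=r$.) Hence
\[
C_d(m)=2\min\{r,\,d+1,\,2r-d\}.
\]

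Next, identify the minimum. Note that $2r-d\le r$ exactly when $r\le d$, and $r\le d+1$ exactly when $r\le d+1$.
\begin{itemize}
\item If $r\le d$, the chain $2r-d\le r\le d<d+1$ shows the minimum is $2r-d$. Then $C_d(m)=4r-2d=2m-2d$, so $\Delta_d(m)=2d-m$. This quantity is positive for $m<2d$ and vanishes at $m=2d$.
\item If $r\ge d+1$, then $2r-d\ge r+1>d+1$ and $r\ge d+1$, so the minimum is $d+1$. Then $C_d(m)=2d+2$ and $\Delta_d(m)=m-2d-2$. This vanishes at $m=2d+2$ and is positive beyond.
\end{itemize}

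Since $m$ is even, the only even values with $2d\le m\le 2d+2$ are $m=2d$ and $m=2d+2$. These are exactly the boundary cases $r=d$ and $r=d+1$ of the two regimes, where the deficit is $0$. This yields the three-case formula. The distance form follows because $2d-m$, $0$ and $m-2d-2$ are precisely $\operatorname{dist}(m,[2d,2d+2])$ on the respective ranges.

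There is no real obstacle here. The only care needed is bookkeeping: confirming that the fourth term of the minimum ($r+1$, or $r$ for $c_{r+1}$) is never active, and checking that the case boundaries at $r=d$ and $r=d+1$ match the stated piecewise ranges for even $m$.
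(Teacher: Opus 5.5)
Your proof is correct and follows the paper's argument: both use Lemma~\ref{lem:profile} to reduce to $C_d(2r)=2\min\{r,d+1,2r-d\}$ and then identify which term of the minimum is active. The only cosmetic difference is the case split. You use two regimes, $r\le d$ and $r\ge d+1$, which absorb the zero-deficit points $m=2d$ and $m=2d+2$. The paper instead treats $d\le r\le d+1$ as a separate middle case in which the minimum equals $r$.
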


\begin{proof}
By Lemma~\ref{lem:profile} and central symmetry,
\[
C_d(2r)=2\min\{r,d+1,2r-d\}.
\]
If $r<d$, the minimum is $2r-d$, so
\[
\Delta_d(2r)=2r-2(2r-d)=2d-2r=2d-m.
\]
If $d\le r\le d+1$, the minimum is $r$, giving $\Delta_d(2r)=0$. If $r>d+1$, the minimum is $d+1$, so
\[
\Delta_d(2r)=2r-2d-2=m-2d-2.
\]
These are exactly the distances from $m$ to the interval $[2d,2d+2]$.
\end{proof}

\begin{corollary}[Two-boundary rigidity]\label{cor:saturation}
For even $m>d$,
\[
C_d(m)=m
\]
if and only if
\[
2d\le m\le2d+2.
\]
Hence an even return satisfying incidence saturation must have
\[
\boxed{\;m\in\{2d,2d+2\}.\;}
\]
More generally, if $\Delta_d(m)=2h$, then
\[
m=2d-2h\qquad\text{or}\qquad m=2d+2+2h.
\]
\end{corollary}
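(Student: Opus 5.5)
This is a direct reading of Theorem~\ref{thm:deficit}, so no new incidence computation is needed. The plan is to translate the equation $C_d(m)=m$ into $\Delta_d(m)=0$ and then invert the piecewise formula for $\Delta_d$ case by case.

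For the first assertion, fix an even $m>d$. By definition $C_d(m)=m$ holds exactly when $\Delta_d(m)=0$. By the boxed identity in Theorem~\ref{thm:deficit}, $\Delta_d(m)=\operatorname{dist}\bigl(m,[2d,2d+2]\bigr)$. The distance from a real number to a closed interval vanishes precisely when the number lies in the interval, so saturation is equivalent to $2d\le m\le 2d+2$. Since $m$ is even and $2d$, $2d+2$ are consecutive even integers, the only even values in that interval are $m=2d$ and $m=2d+2$; the odd value $2d+1$ is excluded by parity. If $m$ is an even first return satisfying $C=R$, it therefore lies in $\{2d,2d+2\}$.

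For the general statement, suppose $\Delta_d(m)=2h$ with $h\ge1$; the case $h=0$ is the one just treated. A positive deficit rules out the middle branch of Theorem~\ref{thm:deficit}, so only two cases remain.
\begin{itemize}
\item If $m<2d$, then $2d-m=2h$, so $m=2d-2h$.
\item If $m>2d+2$, then $m-2d-2=2h$, so $m=2d+2+2h$.
\end{itemize}
Conversely, each of these values of $m$ lies in the stated range and gives deficit $2h$. For the first value this uses the standing hypothesis $m>d$, which restricts $h<d/2$.

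Because $\Delta_d(m)$ is always an even integer for even $m$, writing it as $2h$ loses no generality.

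The only point needing care is the role of the hypothesis $m>d$, which is inherited from Theorem~\ref{thm:deficit} where the profile is defined. Parity does the rest. I expect no genuine obstacle.
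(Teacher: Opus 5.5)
Your proposal is correct and follows the paper's own argument: you read $C_d(m)=m$ as $\Delta_d(m)=0$, locate $m$ in $[2d,2d+2]$ via Theorem~\ref{thm:deficit}, use parity to exclude $2d+1$, and solve the two nonzero branches to get $m=2d-2h$ or $m=2d+2+2h$. Your additional remarks are correct and left implicit in the paper: the converse (with $h<d/2$ needed for the first branch so that $m>d$), and the fact that $\Delta_d(m)$ is always even, so writing the deficit as $2h$ loses nothing.
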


\begin{proof}
Immediate from Theorem~\ref{thm:deficit}; because $m$ is even, the zero-defect interval $[2d,2d+2]$ contains only the two integer candidates $2d$ and $2d+2$. The fixed-deficit statement follows by solving the two nonzero branches of the same formula.
\end{proof}

\section{Two boundary formulas}

It is convenient to index the digits from the right. Write
\[
N_B=\sum_{k=0}^{d-1}a_kB^k,
\]
where $a_{d-1}\ne0$ and the initial Keith seed, from left to right, is
\[
a_{d-1},a_{d-2},\ldots,a_0.
\]
Let
\[
A=\sum_{k=0}^{d-1}a_k.
\]

\begin{lemma}[Boundary coefficients]\label{lem:boundary}
For every order $d\ge2$,
\[
K_{2d}=2^{d-1}A-\sum_{k=1}^{d-1}2^{k-1}a_k,
\]
and
\[
K_{2d+2}=(2^{d+1}-1)A-\sum_{k=0}^{d-1}2^{k+1}a_k.
\]
\end{lemma}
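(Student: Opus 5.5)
The plan is to work directly with the recurrence in the ``telescoped'' form. With the right-indexed digits, the seed is $K_i=a_{d-i}$ for $1\le i\le d$, so $K_{d+1}=A$. For every $n\ge d+1$, subtracting the defining equations for $K_{n+1}$ and $K_n$ gives
\[
K_{n+1}=2K_n-K_{n-d}.
\]
All formulas will be derived from this two-term form. It is valid as soon as both $K_{n+1}$ and $K_n$ are defined by the full $d$-term recurrence, that is, for $n\ge d+1$.

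\textbf{Step 1: the first block.} I will prove by induction on $j$, for $1\le j\le d$, that
\[
K_{d+j}=2^{j-1}A-\sum_{i=1}^{j-1}2^{\,j-1-i}K_i .
\]
The base case $j=1$ is $K_{d+1}=A$. For the inductive step with $j+1\le d$, apply $K_{d+j+1}=2K_{d+j}-K_j$. Doubling the induction hypothesis shifts every exponent up by one, and the new term $-K_j$ enters with coefficient $2^0$, which is exactly the $i=j$ term of the next sum. Taking $j=d$ and substituting $K_i=a_{d-i}$, $k=d-i$ (so $k$ runs over $1,\dots,d-1$ and $2^{d-1-i}=2^{k-1}$) yields the stated formula for $K_{2d}$.

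\textbf{Step 2: two more steps.} Apply the two-term recurrence twice more. First,
\[
K_{2d+1}=2K_{2d}-K_d=2^{d}A-\sum_{i=1}^{d}2^{\,d-i}K_i .
\]
Here the extra term $-K_d$ is precisely the missing $i=d$ summand, with coefficient $2^0$. Second,
\[
K_{2d+2}=2K_{2d+1}-K_{d+1}=2^{d+1}A-A-\sum_{i=1}^{d}2^{\,d-i+1}K_i .
\]
Re-indexing by $k=d-i\in\{0,\dots,d-1\}$ gives $(2^{d+1}-1)A-\sum_{k=0}^{d-1}2^{k+1}a_k$, as claimed.

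The only point needing care is bookkeeping rather than any real obstacle. One must check that the two-term recurrence is legitimately applied only for $n\ge d+1$, which holds at every step used ($n=d+1,\dots,2d+1$). One must also get the re-indexing between left-indexed terms $K_i$ and right-indexed digits $a_k$ right, including that the $k=0$ digit appears only in the second formula. For $d=2$ the sum in Step 1 has a single term, and the same computation goes through verbatim.
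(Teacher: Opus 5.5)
Your proof is correct and follows the paper's route. Both arguments run an induction giving $K_{d+j}=2^{j-1}A-\sum_{i=1}^{j-1}2^{j-1-i}K_i$ (the paper carries it through $j=d+1$, whereas you stop at $j=d$ and take one extra sliding-window step) and then finish with $K_{2d+2}=2K_{2d+1}-K_{d+1}$ and $K_{d+1}=A$; your explicit check that $K_{n+1}=2K_n-K_{n-d}$ is used only for $n\ge d+1$ is a detail the paper leaves implicit.
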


\begin{proof}
For $1\le j\le d+1$, direct induction from the recurrence gives
\[
K_{d+j}=2^{j-1}A-\sum_{i=1}^{j-1}2^{j-1-i}x_i,
\]
where $x_i=a_{d-i}$. Taking $j=d$ gives the first formula. Taking $j=d+1$ gives
\[
K_{2d+1}=2^dA-\sum_{k=0}^{d-1}2^ka_k.
\]
The sliding-window identity
\[
K_{n+1}=2K_n-K_{n-d}
\]
then yields
\[
K_{2d+2}=2K_{2d+1}-K_{d+1}
=(2^{d+1}-1)A-\sum_{k=0}^{d-1}2^{k+1}a_k,
\]
since $K_{d+1}=A$.
\end{proof}

\section{The early branch is impossible for \texorpdfstring{$d\ge3$}{d >= 3}}

Define
\[
f_0=1-2^{d-1},
\]
and, for $1\le k\le d-1$,
\[
f_k=B^k-2^{d-1}+2^{k-1}.
\]
Then Lemma~\ref{lem:boundary} gives
\[
N_B-K_{2d}=\sum_{k=0}^{d-1}a_kf_k.
\]
The leading coefficient is
\[
f_{d-1}=B^{d-1}-2^{d-2}>0.
\]

\begin{lemma}[Early-return capacity]\label{lem:earlycapacity}
For every $B\ge3$ and $d\ge3$,
\[
f_{d-1}>(B-1)\sum_{\substack{0\le k\le d-2\\f_k<0}}(-f_k).
\]
Consequently,
\[
K_{2d}<N_B
\]
for every $d$-digit seed.
\end{lemma}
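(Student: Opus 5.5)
The plan is to prove the capacity inequality first and then deduce $K_{2d}<N_B$ from it by a one-line extremal argument. The deduction goes first because it is routine. By Lemma~\ref{lem:boundary} we have $N_B-K_{2d}=\sum_{k=0}^{d-1}a_kf_k$. In this sum we use $a_{d-1}\ge1$ together with $f_{d-1}>0$. We drop every term with $f_k\ge0$, since those contribute nonnegatively. Each term with $f_k<0$ and $k\le d-2$ is bounded below by $(B-1)f_k$, because $0\le a_k\le B-1$. This gives
\[
N_B-K_{2d}\ \ge\ f_{d-1}-(B-1)\sum_{\substack{0\le k\le d-2\\ f_k<0}}(-f_k)\ >\ 0 ,
\]
which is the consequence. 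Everything therefore rests on the inequality itself.

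For the inequality, the first step is to identify the negative coefficients. The term $f_0=1-2^{d-1}$ is always negative. For $1\le k\le d-2$, $f_k<0$ forces $B^k<2^{d-1}$, so the negative indices form an initial segment $\{0,1,\dots,t\}$. Here $t$ is the largest $k\le d-2$ with $B^k+2^{k-1}<2^{d-1}$, and in particular $B^{t}<2^{d-1}$. Summing the geometric pieces exactly gives
\[
S:=\sum_{f_k<0}(-f_k)=(t+1)2^{d-1}-1-\frac{B^{t+1}-B}{B-1}-(2^{t}-1).
\]
The target then becomes
\[
B^{d-1}-2^{d-2}>(B-1)(t+1)2^{d-1}-(B-1)2^{t}-B^{t+1}+B .
\]
Since $t+1\le (d-1)\log 2/\log B+1$, the right side is at most about $(B-1)\bigl(1+(d-1)\log_B2\bigr)2^{d-1}$. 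The left side is about $B^{d-1}$. So the ratio of the two sides behaves like $(B/2)^{d-1}$ divided by something linear in $d$ and $B$. This gives an analytic proof once $B$ or $d$ is moderately large. For instance, $B\ge 8$ already makes $B^{d-1}/2^{d-1}\ge 4^{d-1}$ dominate $(B-1)\,d$ for all $d\ge3$. For each smaller base we get an explicit threshold $d_0(B)$ beyond which the crude bound suffices.

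The main obstacle is the region where $B$ is small, above all $B=3$ and $B=4$ with small $d$, because there the inequality is nearly tight. For $B=3$, $d=4$ one gets $f_3=23$ against $(B-1)S=2(7+4)=22$. In this region the crude bound $S\le(t+1)2^{d-1}$ fails, for example at $B=3$, $d=3$. So I would first try the exact expression for $S$ above and prove the difference increasing in $d$ for fixed $B$. To do that I would compare consecutive $d$: the left side gains a factor $B\ge3$, while the right side roughly doubles, plus one extra $2^{d-1}$-block whenever $t$ increases, which happens at most once per step. This reduces the problem to the base cases $d=3$ for each small $B$ (and $B$ versus $B+1$ similarly). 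The handful of remaining pairs $(B,d)$ with $B\le7$ below the threshold would then be checked directly from the formulas for $f_k$, as in the computations $f_2=7>6$ for $(3,3)$ and $f_4=73>64$ for $(3,5)$.
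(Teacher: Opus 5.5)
Your route is essentially the paper's. The consequence $K_{2d}<N_B$ follows from exactly your extremal-digit argument; the paper leaves this step implicit. The capacity inequality is proved by bounding the negative capacity by a linear-in-$d$ multiple of $2^{d-1}$ and evaluating finitely many small pairs directly. The only organizational difference is that the paper slices by order rather than by base. It treats $d=3,4$ exactly for every $B$, and uses the uniform bound $\sum_{f_k<0}(-f_k)\le 2^{d-2}(2d-3)$ for $B\ge4$, $d\ge5$. For $B=3$ it evaluates $d=5,6,7$ and then uses the recursion $F_{d+1}-2F_d=3^{d-1}-2^{d+1}>0$ from $d=8$ on.

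One step of your sketch is false as written, though easily repaired. For $B\ge8$ you claim that $4^{d-1}$ dominates $(B-1)d$ for all $d\ge3$. At $B=8$, $d=3$ this reads $16<21$. More generally, for fixed $d$ a $B$-independent lower bound like $4^{d-1}$ cannot beat $(B-1)d$ as $B\to\infty$.

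What you need, after dividing by $2^{d-1}$ and using $t+1\le d-1$ (not $d$), is $(B/2)^{d-1}>(B-1)(d-1)+\tfrac12$. At $d=3$ this is $B^2-8B+6>0$, which holds for $B\ge8$. Passing from $d$ to $d+1$ multiplies the left side by $B/2\ge\tfrac32$ and the right side by at most $d/(d-1)\le\tfrac32$, so the inequality propagates.

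The same inequality gives explicit thresholds: $d\ge8$ for $B=3$, $d\ge5$ for $B=4$, and $d\ge4$ for $B=5,6,7$. That leaves exactly ten pairs to check by hand, namely $(B,d)=(3,3),\ldots,(3,7),(4,3),(4,4),(5,3),(6,3),(7,3)$. For these, $(f_{d-1},(B-1)S)$ equals $(7,6)$, $(23,22)$, $(73,64)$, $(227,162)$, $(697,418)$, $(14,9)$, $(60,30)$, $(23,12)$, $(34,15)$, $(47,18)$, so the inequality holds in each case.

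Once these checks are in place, your monotonicity-in-$d$ step is unnecessary. For $B=3$ it could only be weak monotonicity anyway, since the margin is $1$ at both $d=3$ and $d=4$. Your initial-segment observation and exact formula for $S$ are correct but likewise not needed.
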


\begin{proof}
For $d=3$ one has
\[
(f_0,f_1,f_2)=(-3,B-3,B^2-2),
\]
so the maximum negative contribution is $3(B-1)$, while
\[
B^2-2>3(B-1)\qquad(B\ge3).
\]

For $d=4$,
\[
(f_0,f_1,f_2,f_3)=(-7,B-7,B^2-6,B^3-4).
\]
For $B\ge7$ the negative capacity is $7(B-1)$, which is smaller than $B^3-4$. For $B=3,4,5,6$, the pairs
\[
(f_3,D)=(23,22),(60,30),(121,36),(212,40)
\]
are obtained directly, where $D$ denotes maximum negative capacity.

For general $d$, ignoring the positive terms can only enlarge the negative capacity, and
\[
\sum_{\substack{0\le k\le d-2\\f_k<0}}(-f_k)
\le 2^{d-2}(2d-3).
\]
If $B\ge4$ and $d\ge5$, it is enough to observe that
\[
B^{d-1}-2^{d-2}>B\,2^{d-2}(2d-3)>(B-1)2^{d-2}(2d-3).
\]
The first inequality follows from
\[
2^{d-2}-\frac14>2d-3,
\]
which holds at $d=5$ and then inductively.

It remains to take $B=3$. For $d=5,6,7$, direct evaluation gives
\[
(f_{d-1},D)=(73,64),(227,162),(697,418).
\]
For $d\ge8$,
\[
D\le2^{d-1}(2d-3).
\]
Set
\[
F_d=3^{d-1}-2^{d-2}-2^{d-1}(2d-3).
\]
Then $F_8=459>0$, and for $d\ge8$,
\[
F_{d+1}-2F_d=3^{d-1}-2^{d+1}>0.
\]
Hence $F_d>0$ for every $d\ge8$, which is exactly
\[
3^{d-1}-2^{d-2}>2^{d-1}(2d-3).
\]
This completes the proof.
\end{proof}

\begin{corollary}[Return rigidity]\label{cor:rigidity}
Every higher-order Keith mirror satisfies
\[
R(N)=2d+2.
\]
\end{corollary}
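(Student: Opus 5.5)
The plan is to combine the two structural results already established. By Definition~\ref{def:mirror}, a Keith mirror $N$ of order $d\ge3$ has an even first return $m=R(N)>d$ with $C_d(m)=m$. Corollary~\ref{cor:saturation} then applies directly, with $m$ even and $m>d$, and forces $m\in\{2d,2d+2\}$. So it suffices to exclude $m=2d$.

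To exclude $m=2d$, invoke Lemma~\ref{lem:earlycapacity}. For $B\ge3$ and $d\ge3$ it gives $K_{2d}<N_B$ for every $d$-digit seed, and in particular for the seed of $N$ itself. Then $K_{2d}\ne N$, so $2d$ is not a return index, let alone the first return; hence $R(N)\ne 2d$. Note also that $2d>d$, so the branch $m=2d$ is genuinely a candidate that has to be ruled out, and this is exactly what the lemma does.

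Combining the two steps gives $R(N)=2d+2$. I want to check one subtlety: the lemma's conclusion rests on the identity $N_B-K_{2d}=\sum a_kf_k$ from Lemma~\ref{lem:boundary}. The inequality for $f_{d-1}$ against the negative capacity gives positivity because $a_{d-1}\ge1$, and $a_k\le B-1$ bounds each negative term. That argument is already inside the lemma, so no new work is needed. I therefore expect no real obstacle; the statement is a direct corollary, and the only thing to verify is that both hypotheses ($m$ even, $m>d$) of Corollary~\ref{cor:saturation} are met, which holds by definition of the first return.
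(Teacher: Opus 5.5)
Your proposal is correct and is the paper's proof: Corollary~\ref{cor:saturation} restricts the even first return to $\{2d,2d+2\}$, and Lemma~\ref{lem:earlycapacity} excludes $2d$, since $K_{2d}<N_B$ for every seed when $B\ge3$ and $d\ge3$. Your check that the hypotheses of Corollary~\ref{cor:saturation} ($m$ even, $m>d$) hold by definition of the first return is a correct detail that the paper leaves implicit.
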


\begin{proof}
Corollary~\ref{cor:saturation} leaves only $2d$ and $2d+2$. Lemma~\ref{lem:earlycapacity} excludes $2d$.
\end{proof}

\begin{lemma}[Automatic first return]\label{lem:firstreturn}
Let $B\ge3$ and $d\ge3$. If a $d$-digit seed satisfies
\[
K_{2d+2}=N_B,
\]
then $2d+2$ is automatically the first post-seed occurrence of $N_B$.
\end{lemma}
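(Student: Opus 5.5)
The plan is to show that the Keith sequence is strictly increasing from index $d+1$ up to index $2d+2$. Then no index $n$ with $d<n<2d+2$ can satisfy $K_n=K_{2d+2}=N_B$, so $2d+2$ is the first post-seed occurrence. The argument uses only the sliding-window identity $K_{n+1}=2K_n-K_{n-d}$ from the proof of Lemma~\ref{lem:boundary} and the fact that the leading digit is nonzero. The hypotheses $B\ge3$ and $d\ge3$ play essentially no role beyond guaranteeing $d\ge2$.

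First, I record positivity. Since $x_1=a_{d-1}\ge1$ and all digits are nonnegative, every term $K_j$ is nonnegative. Moreover $K_{d+1}=A\ge1$. Each later term $K_j$ with $j\ge d+1$ is a sum of $d$ nonnegative predecessors, and for $j\ge d+2$ one of these is a previous term of index $\ge d+1$. Hence $K_j\ge1$ for all $j\ge d+1$ by induction.

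Next comes the increment identity. For $n\ge d+1$, the sliding-window relation gives
\[
K_{n+1}-K_n=K_n-K_{n-d}=\sum_{i=n-d+1}^{n-1}K_i,
\]
a sum of $d-1\ge1$ consecutive terms. For $n\ge d+2$ the summation range $[n-d+1,n-1]$ has its top index $n-1\ge d+1$. By the positivity step this summand is at least $1$, so $K_{n+1}>K_n$. For $n=d+1$ the increment is $x_2+\cdots+x_d\ge0$, so only $K_{d+2}\ge K_{d+1}$ holds, and it may be an equality.

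Combining these gives
\[
K_{d+1}\le K_{d+2}<K_{d+3}<\cdots<K_{2d+2}=N_B.
\]
Because $d+2<2d+2$, every $n$ with $d<n<2d+2$ satisfies $K_n\le K_{2d+1}<N_B$, which proves the lemma.

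The only delicate point is the non-strict step at $n=d+1$, where the increment can vanish (for seeds like $x_1 0\cdots 0$). It must be checked that this does not allow $K_{d+1}$ to equal $N_B$. This is handled by chaining it with the strict increments from $K_{d+2}$ onward, which is possible since at least one strict step lies between index $d+2$ and $2d+2$.
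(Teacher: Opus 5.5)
Your proof is correct and follows essentially the paper's route: the sliding-window identity $K_{n+1}-K_n=K_n-K_{n-d}$ gives monotonicity of the post-seed terms, and positivity of the post-seed summands gives the final strict step $K_{2d+1}<K_{2d+2}=N_B$. The paper differs in two small ways. It proves only weak monotonicity, via $K_{n-d}\le A\le K_n$, and it additionally cites Lemma~\ref{lem:earlycapacity} for $K_{2d}<N_B$. Your rewriting of the increment as $\sum_{i=n-d+1}^{n-1}K_i$ instead gives strict increase from $K_{d+2}$ onward. You rightly note that the capacity lemma is not needed (weak monotonicity through $K_{2d+1}$ plus the final strict step already suffices), so the statement holds in any base for every $d\ge2$.
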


\begin{proof}
By Lemma~\ref{lem:earlycapacity}, $K_{2d}<N_B$. We first justify monotonicity on the required interval. Since $K_{d+1}=A$, suppose inductively that $K_n\ge A$ for some $d+1\le n\le2d$. Then $K_{n-d}$ is one of the initial digits and therefore $K_{n-d}\le A$, so the sliding-window identity gives
\[
K_{n+1}-K_n=K_n-K_{n-d}\ge0.
\]
Thus
\[
K_{d+1}\le K_{d+2}\le\cdots\le K_{2d+1}.
\]
In particular, every post-seed term through $K_{2d}$ is at most $K_{2d}<N_B$.

Finally,
\[
K_{2d+2}=K_{2d+1}+K_{2d}+\cdots+K_{d+2}.
\]
All post-seed terms in the displayed sum are positive, so $K_{2d+1}<K_{2d+2}=N_B$. Hence no earlier post-seed term equals $N_B$.
\end{proof}

\section{The mirror identity and capacity}

Define
\[
Y=\sum_{k=0}^{d-1}a_k2^k.
\]

\begin{theorem}[Mirror identity]\label{thm:identity}
For $B\ge3$ and $d\ge3$, a $d$-digit seed is a Keith mirror if and only if
\[
\boxed{\;N_B+2Y=(2^{d+1}-1)A\;}.
\]
Equivalently,
\[
\sum_{k=0}^{d-1}a_ke_k=0,
\]
where
\[
\boxed{\;e_k=B^k+2^{k+1}-(2^{d+1}-1).\;}
\]
\end{theorem}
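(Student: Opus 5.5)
The plan is to show that, for $B\ge3$ and $d\ge3$, each of the two conditions "the seed is a Keith mirror" and "$K_{2d+2}=N_B$" implies the other. After that, only the boundary formula of Lemma~\ref{lem:boundary} has to be rewritten.

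For the forward direction, suppose $N$ is a Keith mirror of order $d\ge3$. Corollary~\ref{cor:rigidity}, which combines Corollary~\ref{cor:saturation} with the exclusion of $2d$ in Lemma~\ref{lem:earlycapacity}, gives $R(N)=2d+2$, and in particular $K_{2d+2}=N_B$.

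For the converse, suppose $K_{2d+2}=N_B$. Lemma~\ref{lem:firstreturn} says $2d+2$ is the first post-seed occurrence of $N_B$, so $R(N)=2d+2$, which is even. Theorem~\ref{thm:deficit} gives $\Delta_d(2d+2)=0$, that is, $C_d(R(N))=R(N)$. So $N$ is a Keith mirror. The condition $m>d$ needed in Theorem~\ref{thm:deficit} holds trivially here.

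Next I translate $K_{2d+2}=N_B$ into the boxed identity. Lemma~\ref{lem:boundary} gives
\[
K_{2d+2}=(2^{d+1}-1)A-\sum_{k=0}^{d-1}2^{k+1}a_k=(2^{d+1}-1)A-2Y.
\]
Setting this equal to $N_B$ gives $N_B+2Y=(2^{d+1}-1)A$. For the coefficient form, I move everything to one side and expand $N_B=\sum a_kB^k$, $2Y=\sum a_k2^{k+1}$ and $A=\sum a_k$. Collecting by $a_k$ gives $\sum_k a_k\bigl(B^k+2^{k+1}-(2^{d+1}-1)\bigr)=0$, which is exactly $\sum a_ke_k=0$.

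No step here is a real obstacle; all the work sits in the earlier lemmas. The one point needing care is the converse: $K_{2d+2}=N_B$ alone does not make $2d+2$ the first return index. This is precisely where Lemma~\ref{lem:firstreturn} is needed. It rules out an earlier occurrence, including at $2d$ via Lemma~\ref{lem:earlycapacity}, and an earlier occurrence would destroy the evenness and saturation properties attached to $R(N)$.
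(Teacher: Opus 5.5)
Your proposal is correct and follows essentially the same route as the paper: Corollary~\ref{cor:rigidity} for the forward direction, Lemma~\ref{lem:firstreturn} plus the deficit law for the converse (the paper cites Corollary~\ref{cor:saturation}, which is the same fact), and Lemma~\ref{lem:boundary} to rewrite $K_{2d+2}=N_B$ as the boxed identity. Your explicit collection of terms into the coefficients $e_k$ fills in a step the paper leaves implicit.
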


\begin{proof}
By Corollary~\ref{cor:rigidity}, a mirror must return at $2d+2$. Lemma~\ref{lem:boundary} gives
\[
K_{2d+2}=(2^{d+1}-1)A-2Y.
\]
Thus $K_{2d+2}=N_B$ is precisely the displayed identity. Conversely, the identity gives $K_{2d+2}=N_B$, and Lemma~\ref{lem:firstreturn} makes this the first return. Corollary~\ref{cor:saturation} then gives $C=R$.
\end{proof}

Set
\[
T_d=2^{d+1}-1.
\]
For a solution of Theorem~\ref{thm:identity}, the leading coefficient is
\[
e_{d-1}=B^{d-1}-2^d+1>0.
\]

\begin{lemma}[Capacity criterion]\label{lem:capacity}
A necessary condition for a higher-order Keith mirror is
\[
\boxed{\;e_{d-1}\le(B-1)\sum_{e_k<0}(-e_k).\;}
\]
\end{lemma}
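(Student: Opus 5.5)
The plan is to derive the criterion directly from the mirror identity of Theorem~\ref{thm:identity}. That theorem shows that a higher-order Keith mirror is exactly a $d$-digit seed satisfying
\[
\sum_{k=0}^{d-1}a_ke_k=0 .
\]
We isolate the leading term, writing the identity as
\[
a_{d-1}e_{d-1}=-\sum_{k=0}^{d-2}a_ke_k .
\]
Here we use that $e_{d-1}=B^{d-1}-2^d+1>0$, as recorded just before the lemma. This positivity should be checked once: it amounts to $B^{d-1}\ge 3^{d-1}\ge 2^d$ for $d\ge3$, and the inequality is in fact strict.

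Next we bound the right-hand side from above. Split the indices $0\le k\le d-2$ according to the sign of $e_k$.
\begin{itemize}[leftmargin=2em]
\item Terms with $e_k\ge0$ contribute $-a_ke_k\le0$, since $a_k\ge0$, so they can be dropped.
\item Terms with $e_k<0$ contribute $a_k(-e_k)\le(B-1)(-e_k)$, using the digit bound $a_k\le B-1$.
\end{itemize}
Hence
\[
a_{d-1}e_{d-1}\le(B-1)\sum_{\substack{0\le k\le d-2\\e_k<0}}(-e_k).
\]

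Finally we bound the left-hand side from below. The leading digit is nonzero, so $a_{d-1}\ge1$, and with $e_{d-1}>0$ this gives $e_{d-1}\le a_{d-1}e_{d-1}$. Since $e_{d-1}>0$, the index $k=d-1$ never appears among the negative coefficients. The sum over $0\le k\le d-2$ with $e_k<0$ is therefore the same as the sum over all $k$ with $e_k<0$, which yields the boxed inequality.

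There is no real obstacle here. The only points needing care are the strict positivity of $e_{d-1}$ for $B\ge3$, $d\ge3$ and the use of $a_{d-1}\ge1$. The argument parallels the proof of Lemma~\ref{lem:earlycapacity}, with the coefficients $f_k$ replaced by $e_k$ and the inequality now used as a necessary condition rather than an impossibility.
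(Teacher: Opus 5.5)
Your proof is correct and follows the same argument as the paper. You apply the mirror identity $\sum a_k e_k=0$, use $a_{d-1}\ge1$ with $e_{d-1}>0$ to get a lower bound of $e_{d-1}$ on the positive side, bound each negative-coefficient term by $(B-1)(-e_k)$, and discard the nonnegative terms. You are slightly more explicit than the paper in checking $e_{d-1}>0$ and in noting that the index $k=d-1$ never enters the negative sum.
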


\begin{proof}
Because $a_{d-1}\ge1$, the positive side of
\[
\sum a_ke_k=0
\]
contains at least $e_{d-1}$. Every negative coefficient can contribute at most $(B-1)(-e_k)$ in absolute value. Therefore the total negative capacity must be at least the leading positive excess.
\end{proof}

\section{Finite support}

\begin{proposition}[Exclusion of bases $B\ge4$ at high order]\label{prop:B4}
If $B\ge4$ and $d\ge7$, then no higher-order Keith mirror exists.
\end{proposition}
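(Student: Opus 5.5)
We will show that the capacity criterion of Lemma~\ref{lem:capacity} fails whenever $B\ge4$ and $d\ge7$. Combined with Theorem~\ref{thm:identity}, this excludes all higher-order mirrors in that region. Concretely, we must prove
\[
e_{d-1}=B^{d-1}-2^d+1>(B-1)\sum_{e_k<0}(-e_k).
\]

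The first step is to bound the negative capacity crudely. Each coefficient satisfies $e_k=B^k+2^{k+1}-T_d$, and every term except $-T_d$ is positive. Hence whenever $e_k<0$ we have $-e_k<T_d<2^{d+1}$.

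Which $k$ can have $e_k<0$? This requires $B^k<2^{d+1}$. For $B\ge4$ that means $4^k\le B^k<2^{d+1}$, so $k<(d+1)/2$. The number of negative coefficients is therefore at most $\lceil (d+1)/2\rceil\le (d+2)/2$. This yields
\[
\sum_{e_k<0}(-e_k)\le \frac{d+2}{2}\,2^{d+1}=(d+2)2^d .
\]
The count restriction is what makes the argument work; without it, the trivial count $d-1$ would also suffice for large $B$, but not at $B=4$.

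It then suffices to show
\[
B^{d-1}-2^d+1>(B-1)(d+2)2^d .
\]
Since the right side is at most $B(d+2)2^d$, it is enough to prove $B^{d-2}\ge (d+3)2^d$, which also absorbs the $-2^d$ term. The ratio $B^{d-2}/2^d$ is increasing in $B$, so we may reduce to $B=4$. There the condition becomes $2^{2d-4}\ge (d+3)2^d$, that is, $2^{d-4}\ge d+3$.

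This last inequality fails at $d=7$ (where $8<10$) and holds only from $d=8$ on. So the cheap chain leaves a few small cases. We would handle them in two ways.
\begin{itemize}
\item \emph{Sharpen the estimate.} Use the exact sum $\sum_{e_k<0}(-e_k)\le \sum_{k<(d+1)/2}(T_d-B^k-2^{k+1})$ rather than the uniform bound.
\item \emph{Direct evaluation.} For $d=7$ and $B=4,5,6,\dots$ up to the point where $B^{d-2}\ge(d+3)2^d$ holds (about $B\le 5$ when $d=7$), compute the finitely many coefficients and capacities explicitly, in the style of Lemma~\ref{lem:earlycapacity}.
\end{itemize}

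The main obstacle is keeping the constant sharp enough that only finitely many $(B,d)$ pairs need direct checking. We expect the sharpened estimate at $B=4$ to be needed. There the negative indices are $k\le \lfloor d/2\rfloor$, and their capacity is roughly $(d/2)\,2^{d+1}\cdot 3$, which must be compared against $4^{d-1}$. This comparison is comfortable for $d\ge7$ once the exact values are used: at $d=7$ and $B=4$, $e_6=4096-127=3969$ against a capacity of $3(127\cdot 4-\ldots)$, which is well below. Induction on $d$ then completes the argument, by ratio comparison as in the $B=3$ argument of Lemma~\ref{lem:earlycapacity}.
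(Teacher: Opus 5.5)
Your proof is correct, and it bounds the negative capacity differently from the paper.

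\textbf{Comparison with the paper.} The paper treats all $d-1$ lower coefficients as possibly negative, which gives the crude bound $U_{B,d}=(B-1)(d-1)(2^{d+1}-1)$. That bound is too weak at $(B,d)=(4,7)$, since $U_{4,7}=4590>3969$. So the paper settles that cell from the exact coefficients $(-252,-247,-231,-175,33,833,3969)$. It then checks the base cells $(4,8)$ and $(B\ge5,\,7)$ and propagates in $d$ by comparing $L_{B,d+1}>3L_{B,d}$ with $U_{B,d+1}/U_{B,d}<3$.

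You instead use $B\ge4$ to show that only indices $k<(d+1)/2$ can carry a negative coefficient. This yields one closed inequality, $B^{d-2}\ge(d+3)2^d$, which is monotone in $B$ and reduces at $B=4$ to $2^{d-4}\ge d+3$. It disposes of every cell except $(4,7)$ with no ratio induction. Your version is more self-contained and explicit. The paper's is more mechanical and follows the same growth-ratio template it uses in the ternary case.

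\textbf{Details to fix.}
\begin{itemize}
\item At $d=7$ only $B=4$ escapes your chain, because $5^5=3125\ge 10\cdot 2^7=1280$.
\item In the $(4,7)$ check the relevant constant is $T_7=2^8-1=255$, not $127$. Your expression $3(127\cdot4-\cdots)$ therefore understates the capacity, which would make it invalid as an upper bound. The negative coefficients are $-252,-247,-231,-175$, so the capacity is $3\cdot 905=2715<3969$.
\item Your own count refinement already handles $(4,7)$ without listing coefficients: there are $\lceil 8/2\rceil=4$ negative coefficients, each of size less than $255$, so the capacity is below $3\cdot4\cdot255=3060<3969$.
\item The closing ``induction on $d$'' is unnecessary. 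The inequality $2^{d-4}\ge d+3$ holds for all $d\ge8$, so your chain already finishes those orders for every $B\ge4$.
\item Your aside that the trivial count $d-1$ would not suffice at $B=4$ is true only at $d=7$. The paper uses exactly that count for $B=4$, $d\ge8$, where $U_{4,8}=10731<16129$.
\end{itemize}
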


\begin{proof}
For $B=4,d=7$, the coefficients are
\[
(-252,-247,-231,-175,33,833,3969),
\]
so the maximum negative capacity is
\[
3(252+247+231+175)=2715<3969=e_6.
\]

For $B=4,d=8$, the leading excess is
\[
L_{4,8}=4^7-2^8+1=16129,
\]
whereas the crude bound
\[
U_{B,d}=(B-1)(d-1)(2^{d+1}-1)
\]
gives
\[
U_{4,8}=10731.
\]
For fixed $B\ge4$,
\[
L_{B,d+1}>3L_{B,d},
\]
while, for $d\ge8$,
\[
\frac{U_{B,d+1}}{U_{B,d}}
=\frac{d}{d-1}\frac{2^{d+2}-1}{2^{d+1}-1}<3.
\]
Thus the inequality persists for all $d\ge8$.

For $B\ge5,d=7$ the same crude bound already gives
\[
B^6-2^7+1>(B-1)6(2^8-1),
\]
with the smallest case $B=5$ equal to $15498>6120$. The same growth comparison propagates the exclusion to all larger $d$.
\end{proof}

\begin{proposition}[Ternary extinction]\label{prop:ternary}
For $B=3$, no higher-order Keith mirror exists for $d\ge11$.
\end{proposition}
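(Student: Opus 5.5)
The plan is to apply the capacity criterion (Lemma~\ref{lem:capacity}) with $B=3$, so every digit lies in $\{0,1,2\}$ and $B-1=2$. Write $T_d=2^{d+1}-1$ and $e_k=3^k+2^{k+1}-T_d$. The coefficient $e_k$ is negative exactly when $3^k+2^{k+1}<T_d$. In particular this forces $3^k<T_d$, so the number $n_d$ of negative coefficients satisfies $n_d\le\lfloor\log_3 T_d\rfloor+1$. Each negative coefficient has $-e_k<T_d$. This gives the crude capacity bound
\[
D_d:=2\sum_{e_k<0}(-e_k)\le 2n_dT_d .
\]
I will show $e_{d-1}=3^{d-1}-2^d+1>D_d$ for all $d\ge12$, and handle $d=11$ separately.

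\emph{Step 1: the case $d\ge12$.} At $d=12$ we have $T_{12}=8191$, and $3^k<8191$ forces $k\le8$, so $n_{12}\le9$. Then $2\cdot9\cdot8191=147438<177147-4096+1=173052=e_{11}$.

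For the induction I compare growth rates, as in Proposition~\ref{prop:B4}. Passing from $d$ to $d+1$:
\begin{itemize}[label={}]
\item $e_{d-1}$ grows by a factor greater than $3$, since $e_d-3e_{d-1}=2^{d}-2>0$;
\item $T_d$ grows by a factor less than $2+\tfrac{1}{T_d}$;
\item $n_d$ increases by at most $1$, since $\log_3 T_{d+1}<\log_3 T_d+1$.
\end{itemize}
So for $n_d\ge9$ the ratio $D_{d+1}/D_d$ is at most roughly $2\cdot\tfrac{10}{9}(1+\varepsilon)<3$. Hence the strict inequality propagates to all $d\ge12$, and Lemma~\ref{lem:capacity} excludes these orders.

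\emph{Step 2: the case $d=11$.} This is the main obstacle, because the capacity criterion alone does not exclude it. Here $T_{11}=4095$, and the negative coefficients are exactly $e_0,\dots,e_7$. Their absolute values are
\[
4092,\ 4088,\ 4078,\ 4052,\ 3982,\ 3780,\ 3238,\ 1852,
\]
summing to $28970$. So $D_{11}=57940$, while $e_{10}=57002$. The capacity survives, but only with slack $938$.

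I will exploit this small slack as follows. In the identity of Theorem~\ref{thm:identity}, the positive side is at least $a_{10}e_{10}+a_8e_8+a_9e_9$, with $e_8=2978$ and $e_9=17\,612$. Each of $e_8$ and $e_9$ exceeds $938$, so $a_8=a_9=0$. Next, $a_{10}=2$ is impossible, since $2e_{10}>D_{11}$; hence $a_{10}=1$. Finally, every $-e_k$ with $k\le7$ exceeds $938$, so no negative-index digit can drop below $2$. This forces $a_0=\cdots=a_7=2$.

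Substituting these digits gives $\sum a_ke_k=57002-57940=-938\neq0$, a contradiction. So no mirror exists at $d=11$.

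Together, Steps 1 and 2 prove ternary extinction for all $d\ge11$.
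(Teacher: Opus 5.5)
Your proof is correct and follows the paper's route. The $d=11$ slack-$938$ argument is the same as the paper's. For larger $d$ you use the same capacity-plus-growth comparison, except that your logarithmic count of negative coefficients makes the crude bound work already at $d=12$; the paper instead uses the exact capacity $125712$ at $d=12$ and the cruder bound $2(d-1)T_d$ only from $d=13$.

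Two minor repairs are needed. First, your listed values contain slips: the correct values are $-e_5=3788$, $-e_7=1652$ and $e_9=16612$, and your listed numbers do not sum to your (correct) total $28970$. No step depends on these slips. Second, the growth step should be stated for the bound $2(\lfloor\log_3T_d\rfloor+1)T_d$ rather than for $D_d$ itself.
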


\begin{proof}
For $d=11$, the coefficient vector is
\[
(-4092,-4088,-4078,-4052,-3982,-3788,-3238,-1652,2978,16612,57002).
\]
The maximum negative capacity is
\[
2(4092+4088+4078+4052+3982+3788+3238+1652)=57940.
\]
Since $2\cdot57002>57940$, the leading digit must be $1$. Any positive digit at coefficient $16612$ gives total positive excess at least $73614$, and any positive digit at coefficient $2978$ gives at least $59980$; hence those two digits must be zero. The negative side must therefore equal $57002$.

At full negative capacity the value is $57940$, so one would have to remove exactly
\[
57940-57002=938.
\]
But decreasing any negative digit from its maximal value $2$ changes the total by at least $1652$. Hence $938$ is impossible.

For $d=12$, direct capacity gives
\[
e_{11}=3^{11}-2^{12}+1=173052,
\]
while the exact negative capacity is $125712$.

For $d\ge13$, let
\[
L_d=3^{d-1}-2^d+1,
\qquad
U_d=2(d-1)(2^{d+1}-1).
\]
At $d=13$,
\[
L_{13}=523250>393192=U_{13}.
\]
Moreover,
\[
L_{d+1}>\frac52L_d,
\]
because
\[
L_{d+1}-\frac52L_d
=\frac12\,3^{d-1}+\frac12\,2^d-\frac32>0,
\]
whereas
\[
\frac{U_{d+1}}{U_d}
=\frac{d}{d-1}\frac{2^{d+2}-1}{2^{d+1}-1}<\frac52
\]
for $d\ge13$. Thus the capacity inequality fails for every $d\ge13$.
\end{proof}

\begin{proposition}[Low-order support]\label{prop:low-support}
Any higher-order Keith mirror lies in the following finite support:
\[
\begin{array}{c|c}
 d & \text{possible bases before exact digit solving}\\
\hline
3 & 3\le B\le11\\
4 & 3\le B\le6\\
5,6 & 3\le B\le4\\
7,8,9,10 & B=3\\
\ge11 & \varnothing.
\end{array}
\]
\end{proposition}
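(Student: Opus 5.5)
The plan is to combine the two extinction propositions with the capacity criterion of Lemma~\ref{lem:capacity}, applied row by row for the remaining small orders. First, Proposition~\ref{prop:ternary} removes $B=3$ for $d\ge11$. Proposition~\ref{prop:B4} removes every $B\ge4$ with $d\ge7$. Together these give the rows $d\ge11\mapsto\varnothing$ and $d=7,8,9,10\mapsto\{3\}$. It then remains to bound $B$ for each $d\in\{3,4,5,6\}$. For these we use the coefficients $e_k=B^k+2^{k+1}-T_d$ of Theorem~\ref{thm:identity}. The necessary condition is $e_{d-1}\le(B-1)\sum_{e_k<0}(-e_k)$.

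For fixed $d$ the negative part is bounded independently of $B$. The term $e_0=3-T_d$ is always negative, and the other $e_k$ with $k\ge1$ are negative only while $B^k<T_d-2^{k+1}$. So the negative capacity is at most $(B-1)\bigl((d-1)T_d\bigr)$, roughly linear in $B$, while $e_{d-1}\approx B^{d-1}$ grows at least quadratically. This reduces each row to a finite threshold check.

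\textbf{Row $d=3$.} Here $T_3=15$ and $(e_0,e_1,e_2)=(-12,\,B-11,\,B^2-7)$. For $B\ge11$ only $e_0<0$, and the condition $B^2-7\le12(B-1)$ fails once $B\ge12$ (at $B=11$ one has $114\le120$). For $B\le10$ the term $e_1$ is also negative, and the inequality holds. So the row is $3\le B\le11$.

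\textbf{Row $d=4$.} Here $T_4=31$ and $e=(-28,\,B-27,\,B^2-23,\,B^3-15)$. For $5\le B\le26$ the negative terms are $e_0,e_1$, giving capacity $(B-1)(55-B)$. At $B=6$ the check is $201\le245$, which holds. At $B=7$ it is $328\le288$, which fails. Beyond that the cubic dominates, and this is checked once for all larger $B$ by the same growth argument as in Proposition~\ref{prop:B4}.

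\textbf{Rows $d=5,6$.} For $d=5$ ($T_5=63$) the claimed boundary is $B=4$ versus $B=5$. At $B=5$ one has $e_4=625-32+1=594$. The negative terms are $-60,-54,-30$, with $e_3=125+16-63=78>0$, so the capacity is $4\cdot144=576<594$, which excludes $B=5$. Then $B=4$ must be verified to pass. For $d=6$, $B=5$ gives $e_5=3125-63=3062$ against capacity at most $4\cdot(124+118+102+62)=1624$, which fails.

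In every row, $B$ larger than the threshold is disposed of by the monotone comparison (polynomial of degree $d-1$ versus linear). The only real work is locating the exact threshold, which is a handful of explicit evaluations.

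I expect the main obstacle to be the bookkeeping at the thresholds. The sign pattern of the middle coefficients $e_k$ changes with $B$, so the capacity is piecewise. The first thing to try is tabulating $e_k$ for the boundary bases ($B=11,12$ for $d=3$; $6,7$ for $d=4$; $4,5$ for $d=5,6$). Note that the support table only needs the exclusions to be proved; the included bases need not pass anything, since the table lists only possible bases. The one point to settle by direct arithmetic is the $d=5,6$ exclusions at $B=5$, which must be confirmed precisely.
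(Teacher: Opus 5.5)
Your proposal is correct and follows the paper's proof. The rows $d\ge7$ come from Propositions~\ref{prop:B4} and~\ref{prop:ternary}, and each row $d=3,4,5,6$ is settled by the capacity criterion at the first excluded base ($B=12,7,5,5$), then propagated by monotonicity in $B$: the negative-coefficient sum is nonincreasing in $B$, while $e_{d-1}$ grows like $B^{d-1}$. It is this monotonicity in $B$, not the $d$-growth argument of Proposition~\ref{prop:B4}, that handles $B\ge8$ for $d=4$.

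One small slip: at $(B,d)=(5,6)$ the coefficients are $(-124,-118,-94,14,530,3062)$, so the exact capacity is $4\cdot336=1344$. Your figure $1624$ uses wrong entries but still overestimates the true capacity, so the exclusion of $B=5$ stands.
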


\begin{proof}
The last two rows are Propositions~\ref{prop:B4} and \ref{prop:ternary}. For $d=3$ the coefficients are
\[
(-12,B-11,B^2-7).
\]
For $B\ge12$, the only negative coefficient is $-12$, and
\[
B^2-7>12(B-1),
\]
so capacity fails.

For $d=4$ the coefficients are
\[
(-28,B-27,B^2-23,B^3-15).
\]
For $B\ge7$, the coefficient $B^2-23$ is positive and the negative capacity is at most
\[
48(B-1),
\]
because $\max\{0,27-B\}\le20$. At $B=7$ one has $B^3-15=328>288=48(B-1)$, and the difference
\[
B^3-15-48(B-1)=B^3-48B+33
\]
is strictly increasing for $B\ge7$. Hence capacity fails for every $B\ge7$.

For $d=5$, the case $B=5$ already has leading excess $594$ and exact negative capacity $576$. For $B\ge6$,
\[
(B-1)(60+58+54)=172(B-1)<B^4-31.
\]
The inequality holds at $B=6$, and $B^4-31-172(B-1)$ is strictly increasing for $B\ge6$.

For $d=6$ and $B\ge5$,
\[
(B-1)(124+122+118)=364(B-1)<B^5-63.
\]
The inequality holds at $B=5$, and $B^5-63-364(B-1)$ is strictly increasing for $B\ge5$.
\end{proof}

\begin{theorem}[Finite-support theorem]\label{thm:support}
Every higher-order Keith mirror satisfies
\[
d\le10.
\]
Moreover, if $7\le d\le10$, then necessarily
\[
B=3.
\]
\end{theorem}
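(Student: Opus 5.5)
The plan is to read Theorem~\ref{thm:support} off the preceding propositions; no new inequality should be needed. I begin from Corollary~\ref{cor:rigidity} and Theorem~\ref{thm:identity}. Together they say that every higher-order Keith mirror of order $d\ge3$ in base $B\ge3$ returns at $2d+2$. Its digits therefore satisfy $\sum a_ke_k=0$, so the capacity criterion of Lemma~\ref{lem:capacity} applies. Every exclusion below is an instance of that criterion failing, or of the exact digit obstruction used at $B=3,d=11$.

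The first claim, $d\le10$, comes from splitting on the base. If $B\ge4$ and $d\ge7$, Proposition~\ref{prop:B4} rules out any mirror, so a mirror with $B\ge4$ has $d\le6$. If $B=3$ and $d\ge11$, Proposition~\ref{prop:ternary} rules out any mirror, so a mirror with $B=3$ has $d\le10$. The two cases exhaust all $B\ge3$. Hence every higher-order mirror has $d\le\max\{6,10\}=10$.

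The second claim also follows from Proposition~\ref{prop:B4}. Suppose $7\le d\le10$. Then $B\ge4$ is impossible, and since $B\ge3$ throughout, $B=3$. This is exactly the row $7,8,9,10\mapsto B=3$ of Proposition~\ref{prop:low-support}, which can be cited as a consistency check.

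The main point to verify is that Proposition~\ref{prop:B4} really covers every $d\ge7$ for every $B\ge4$, not only the base cases it computes. I would recheck three growth comparisons used there.

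\begin{itemize}
\item For fixed $B\ge4$, one needs $L_{B,d+1}>3L_{B,d}$. Writing $L_{B,d}=B^{d-1}-2^d+1$, this difference equals $(B-3)B^{d-1}+2^d-2>0$, which holds since $B\ge4$.
\item The ratio $U_{B,d+1}/U_{B,d}$ must stay below $3$. Here $U_{B,d}=(B-1)(d-1)(2^{d+1}-1)$, so the ratio is $\tfrac{d}{d-1}\cdot\tfrac{2^{d+2}-1}{2^{d+1}-1}$, which is slightly above $2$ and hence below $3$.
\item For $B\ge5$ the starting point $d=7$ must hold uniformly in $B$. This requires $B^6-127>6\cdot255\,(B-1)$, which holds at $B=5$ and has an increasing left-minus-right difference for $B\ge5$.
\end{itemize}

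With these checks in place, the case split above closes the argument.
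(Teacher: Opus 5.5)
Your proposal is correct and takes the same approach as the paper, which states this theorem without a separate proof as the direct consequence of Propositions~\ref{prop:B4} and~\ref{prop:ternary} (the last two rows of Proposition~\ref{prop:low-support}). Your recheck of the growth comparisons is accurate. In particular, it confirms that the $U$-ratio is already below $3$ at $d=7$, which the $B\ge5$ case needs even though the paper states that bound only for $d\ge8$.
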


\section{Exact solution of the residual region}

Theorem~\ref{thm:support} reduces the infinite classification problem to a finite set of bounded digit equations. For each pair $(B,d)$ in Proposition~\ref{prop:low-support}, the equation is
\[
\sum_{k=0}^{d-1}a_k\bigl(B^k+2^{k+1}-(2^{d+1}-1)\bigr)=0,
\]
with
\[
0\le a_k\le B-1,\qquad a_{d-1}\ge1.
\]

For $d\le6$ these are small linear digit equations. We solve them by exact enumeration of their bounded digit boxes; no floating-point arithmetic is involved. The complete set of nonempty solution cells is shown in Table~\ref{tab:mirrors}. The candidate cells $(B,d)=(3,3),(4,3),(11,3),(6,4)$ are empty.

For the remaining ternary cases it is convenient to divide the coefficient vector by $2$. The resulting exact bounded equations and finite residue certificates are:
\begin{center}
\scriptsize
\begin{tabular}{c|l|c|c}
$d$ & normalized coefficients $(e_0/2,\ldots,e_{d-1}/2)$ & $M$ & $\#\{E\equiv0\pmod M\}$\\
\hline
7 & $(-126,-124,-119,-106,-71,26,301)$ & $1001$ & $4$\\
8 & $(-254,-252,-247,-234,-199,-102,173,966)$ & $5291$ & $0$\\
9 & $(-510,-508,-503,-490,-455,-358,-83,710,3025)$ & $9361$ & $3$\\
10 & $(-1022,-1020,-1015,-1002,-967,-870,-595,198,2513,9330)$ & $46805$ & $2$\\
\end{tabular}
\end{center}
Here $E$ denotes the corresponding digit-weighted sum with ternary digits $a_k\in\{0,1,2\}$ and $a_{d-1}>0$. In each row the chosen modulus exceeds the maximum absolute value of $E$ on the bounded digit box: the respective maxima are $791,2278,7470,24082$. Hence a zero residue is an actual integer zero. Exact residue expansion therefore certifies the counts $4,0,3,2$ without numerical approximation.

A compact generating-function formulation of the same finite audit is
\[
P_{B,d}(z)=
\left(\sum_{j=1}^{B-1}z^{je_{d-1}}\right)
\prod_{k=0}^{d-2}\left(\sum_{j=0}^{B-1}z^{je_k}\right),
\]
for which the number of bounded digit solutions is
\[
\CT\,P_{B,d}(z).
\]

As an independent check, the supplementary verifier tests every surviving digit vector twice: once against the identity above and once directly from the original Keith recurrence together with the incidence condition $C=R$. The two sets agree exactly in every residual cell and reproduce the distribution $17,5,2,2,4,0,3,2$.

\begin{table}[ht]
\centering
\caption{All higher-order Keith mirrors in bases $B\ge3$. Digit strings are written in their own base.}
\label{tab:mirrors}
\small
\begin{tabular}{c c l}
\toprule
$d$ & $B$ & digit strings \\
\midrule
3 & 5  & $111,130,203,222,241,314,333,444$ \\
3 & 6  & $112,224$ \\
3 & 7  & $236,265$ \\
3 & 8  & $134,173$ \\
3 & 9  & $116,175$ \\
3 & 10 & $197$ \\
\addlinespace
4 & 3 & $2010$ \\
4 & 4 & $1301,2203$ \\
4 & 5 & $1104,1341$ \\
\addlinespace
5 & 3 & $22001$ \\
5 & 4 & $10031$ \\
6 & 3 & $221002$ \\
6 & 4 & $101332$ \\
7 & 3 & $1011010,2022020,2120220,2212102$ \\
9 & 3 & $102102111,111220202,122122112$ \\
10 & 3 & $1001222111,1011112221$ \\
\bottomrule
\end{tabular}
\end{table}

\begin{corollary}[Higher-order count]\label{cor:35}
There are exactly
\[
\boxed{35}
\]
Keith mirrors of order $d\ge3$ in bases $B\ge3$.
Their distribution by order is
\[
17,5,2,2,4,0,3,2
\]
for $d=3,4,5,6,7,8,9,10$, respectively.
\end{corollary}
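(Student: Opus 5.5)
The count follows by assembling the reductions already in hand. By Corollary~\ref{cor:rigidity} and Theorem~\ref{thm:identity}, a $d$-digit base-$B$ seed with $d\ge3$ is a Keith mirror if and only if its digits satisfy $\sum_k a_ke_k=0$ with $0\le a_k\le B-1$ and $a_{d-1}\ge1$. Lemma~\ref{lem:firstreturn} makes $2d+2$ automatically the first return, and Corollary~\ref{cor:saturation} then gives $C=R$. So counting mirrors is the same as counting bounded digit solutions of the mirror identity, summed over all admissible pairs $(B,d)$. Distinct pairs give distinct objects, since a mirror is a seed together with its base. Theorem~\ref{thm:support} and Proposition~\ref{prop:low-support} confine the nonempty cells to the finite support table. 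Hence the total is a finite sum of counts $\CT\,P_{B,d}(z)$ over that table, and there is no infinite tail to control.

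Next I would evaluate each cell. For $d=3$ with $3\le B\le11$, $d=4$ with $3\le B\le6$, and $d=5,6$ with $B\in\{3,4\}$, the digit boxes have at most $11^3$ or $4^6$ points. Exact integer enumeration of $\sum a_ke_k$ lists the solutions shown in Table~\ref{tab:mirrors}, and confirms that the cells $(3,3),(4,3),(11,3),(6,4)$ are empty. As a spot check, $197$ in base $10$ has coefficients $(-12,-1,93)$, and $93-9-7\cdot12=0$. Tallying by order gives $8+2+2+2+2+1=17$ for $d=3$, $1+2+2=5$ for $d=4$, and $2$ each for $d=5$ and $d=6$.

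For the ternary cells $7\le d\le10$ I would use the normalized coefficient table and its residue certificates. Each modulus $M$ exceeds the maximal $|E|$ on the box, so $E\equiv0\pmod M$ forces $E=0$. The residue counts $4,0,3,2$ are therefore exact solution counts, matching the strings listed in the table.

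Summing gives $17+5+2+2+4+0+3+2=35$, with the stated distribution by order. The main obstacle is not conceptual but lies in certifying completeness of the finite audit, above all the ternary $d=9,10$ boxes with $3^{8}\cdot2$ and $3^{9}\cdot2$ points. To address it I would cross-check each listed solution directly against the original recurrence, verifying $K_{2d+2}=N_B$ and $C=R$ as the paper's independent verifier does, and recompute the counts through the constant-term formulation. Agreement of the two methods in every cell certifies the count $35$.
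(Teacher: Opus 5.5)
Your proposal is correct and follows the paper's own argument. The mirror identity and the automatic first-return lemma turn the count into a count of bounded digit solutions, the finite-support theorem removes every cell outside the table, and exact enumeration, with residue certificates for the ternary cells $7\le d\le10$, gives $17+5+2+2+4+0+3+2=35$. One minor remark: re-checking only the \emph{listed} strings against the recurrence certifies soundness, not completeness, so completeness rests on the exhaustive search over each full digit box, which your enumeration and the paper's verifier both provide.
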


\begin{proof}
Table~\ref{tab:mirrors} contains
\[
17+5+2+2+4+0+3+2=35
\]
solutions. Theorem~\ref{thm:support} excludes every pair outside the finite support, and the exact residual audit excludes every unlisted digit vector inside it.
\end{proof}

\section{The exceptional order-\texorpdfstring{$2$}{2} case}

The finite theorem begins at $d=3$ for a structural reason: order $2$ contains families in arbitrarily large bases.

\begin{proposition}[Complete order-$2$ classification]\label{prop:d2}
Let $B\ge3$ and let $N=(ab)_B$ be a two-digit base-$B$ Keith mirror. Then exactly one of the following holds.

\begin{enumerate}[label=(\roman*)]
\item $R(N)=4$ and
\[
(a,b)=(1,B-1).
\]
Thus $(1\,(B-1))_B$ is a Keith mirror in every base $B\ge3$.

\item $R(N)=6$ and
\[
a(B-3)=4b.
\]
For $B=3$ this gives $(a,b)=(1,0),(2,0)$. For $B>3$, writing
\[
g=\gcd(B-3,4),
\]
all solutions are
\[
a=\frac{4t}{g},\qquad b=\frac{(B-3)t}{g},
\]
where
\[
1\le t\le
\min\left\{
\left\lfloor\frac{g(B-1)}4\right\rfloor,
\left\lfloor\frac{g(B-1)}{B-3}\right\rfloor
\right\}.
\]
\end{enumerate}
\end{proposition}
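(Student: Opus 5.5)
The plan is to specialize the general machinery to $d=2$ and solve two small linear equations directly. By Corollary~\ref{cor:saturation}, a two-digit Keith mirror must return at $m\in\{2d,2d+2\}=\{4,6\}$. These two values of $R(N)$ are mutually exclusive, which gives the ``exactly one'' clause. Lemma~\ref{lem:earlycapacity} does not apply at $d=2$, and indeed the early branch survives here. So I will compute the sequence explicitly rather than invoke Lemma~\ref{lem:firstreturn}. With seed $K_1=a$, $K_2=b$ one gets
\[
K_3=a+b,\quad K_4=a+2b,\quad K_5=2a+3b,\quad K_6=3a+5b .
\]
These agree with Lemma~\ref{lem:boundary} at $d=2$, with $A=a+b$. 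Throughout, $N=aB+b$ with $1\le a\le B-1$ and $0\le b\le B-1$.

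\emph{Branch $R=4$.} The equation $K_4=N$ reads $aB+b=a+2b$, i.e.\ $b=a(B-1)$. Since $a\ge1$ and $b\le B-1$, this forces $a=1$ and $b=B-1$. For the converse, $K_3=B$ and $N=2B-1$, and $B\ne 2B-1$ for $B\ge3$. So $4$ is the first post-seed occurrence, and Corollary~\ref{cor:saturation} gives $C=R$. This proves (i) in every base $B\ge3$.

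\emph{Branch $R=6$.} The equation $K_6=N$ reads $aB+b=3a+5b$, i.e.\ $a(B-3)=4b$. For first return, note $K_3\le K_4\le K_5$, since the successive differences are $b\ge0$ and $a+b\ge1$. Also $K_6-K_5=a+2b>0$. Hence $K_3,K_4,K_5<N$, so $6$ is the first return; in particular $R\ne4$, confirming exclusivity.

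It remains to parametrize the solutions of $a(B-3)=4b$ in the digit box.
\begin{itemize}
\item For $B=3$ the equation gives $b=0$, with $a\in\{1,2\}$ free.
\item For $B>3$, set $g=\gcd(B-3,4)$. Dividing by $g$ gives $a\cdot\frac{B-3}{g}=b\cdot\frac4g$ with coprime cofactors. Hence $\frac4g\mid a$, so $a=4t/g$ for some integer $t$, and then $b=(B-3)t/g$.
\item The condition $a\ge1$ gives $t\ge1$.
\item The bounds $a\le B-1$ and $b\le B-1$ translate into the two floors in the stated upper bound for $t$.
\end{itemize}

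No step is deep. The point I would be most careful about is the first-return verification, since Lemma~\ref{lem:firstreturn} is stated only for $d\ge3$. The degenerate case $b=0$ makes $K_3=K_4$, so monotonicity is only weak there. However, strictness is needed only in the final step $K_5<K_6$, which holds because $a\ge1$. The $B=3$ case must be separated because $B-3=0$ makes the $\gcd$ parametrization degenerate.
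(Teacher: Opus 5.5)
Your proposal is correct and follows essentially the same route as the paper: restrict to $R\in\{4,6\}$ via Corollary~\ref{cor:saturation}, solve the two linear digit equations coming from $K_4=a+2b$ and $K_6=3a+5b$, and parametrize $a(B-3)=4b$ after dividing by $g=\gcd(B-3,4)$. The only minor difference is the first-return check in branch (ii): you use the chain $K_3\le K_4<K_5<K_6$, whereas the paper notes that $a(B-1)=b$ is incompatible with $a(B-3)=4b$, and both arguments are valid.
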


\begin{proof}
By Corollary~\ref{cor:saturation}, only $m=4$ and $m=6$ are possible. The order-$2$ recurrence gives
\[
K_4=a+2b,
\]
so $aB+b=K_4$ is equivalent to
\[
a(B-1)=b.
\]
The digit bounds force $a=1$ and $b=B-1$.

Likewise
\[
K_6=3a+5b,
\]
so $aB+b=K_6$ is equivalent to
\[
a(B-3)=4b.
\]
The stated parameterization is the standard coprime parameterization after dividing by $g=\gcd(B-3,4)$. The digit bounds give the range of $t$.

In case (i), $K_3=B<N$, so the return at $4$ is first. In case (ii), $K_3<N$; an earlier return at $4$ would require $a(B-1)=b$, incompatible with $a(B-3)=4b$, and $K_5=2a+3b<K_6=N$. Thus the displayed returns are first returns.
\end{proof}

\section{Main classification}

\begin{theorem}[Mirror Theorem]\label{thm:main}
For bases $B\ge3$, the Keith mirrors are completely classified as follows.

\begin{enumerate}[label=(\arabic*)]
\item Order $d=2$ consists exactly of the two families in Proposition~\ref{prop:d2}.
\item For order $d\ge3$, there are exactly $35$ Keith mirrors, namely the digit strings in Table~\ref{tab:mirrors}.
\item Every higher-order Keith mirror has first return
\[
R=2d+2.
\]
\item No higher-order Keith mirror exists for
\[
d\ge11.
\]
\item If $7\le d\le10$, then every Keith mirror is ternary.
\end{enumerate}
\end{theorem}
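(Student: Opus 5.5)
The Mirror Theorem is an assembly result, so the plan is to match each of its five clauses to a result already established and check that the pieces cover every case with nothing left over. First we split by order. Corollary~\ref{cor:saturation} applies to every $d\ge2$, so any Keith mirror has first return in $\{2d,2d+2\}$. For $d=2$, Proposition~\ref{prop:d2} already covers both branches $R=4$ and $R=6$ and verifies that these are first returns. Clause~(1) therefore follows directly, once we note that the two branches cannot overlap: $a(B-1)=b$ and $a(B-3)=4b$ together force $a(B-1)(B-3)=4a(B-1)$, so $B=7$ with $a(B-1)=b\le B-1$, and then the $R=6$ condition fails the first-return requirement.

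For $d\ge3$, clause~(3) is exactly Corollary~\ref{cor:rigidity}. It combines the two-boundary rigidity with the uniform exclusion of $R=2d$ from Lemma~\ref{lem:earlycapacity}. Theorem~\ref{thm:identity} then turns the mirror condition into the exact linear digit equation $\sum a_k e_k=0$. By Lemma~\ref{lem:firstreturn}, every solution of this equation is automatically a first return at $2d+2$, and hence a mirror. This two-way equivalence is what makes the count possible: mirrors of order $d\ge3$ are in bijection with the bounded digit solutions of the identity. Clauses~(4) and~(5) then follow from Theorem~\ref{thm:support}, which rests on Propositions~\ref{prop:B4}, \ref{prop:ternary} and~\ref{prop:low-support}.

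For clause~(2), we restrict to the finite support table of Proposition~\ref{prop:low-support} and invoke the exact audit of the residual cells. For $d\le6$ this is box enumeration. For $d=7,\dots,10$ with $B=3$ we use the residue certificates, in which each modulus exceeds the maximal $|E|$, so a zero residue is a genuine integer zero. These give the counts recorded in Table~\ref{tab:mirrors}, and Corollary~\ref{cor:35} sums them to $35$.

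The main point needing care is not any single inequality but the consistency of the interfaces between results. Three things must be checked. First, the support table must really exhaust all pairs $(B,d)$ with $d\ge3$ and $B\ge3$. In particular, the $d=5,6$ bounds cover $B\ge5$ resp.\ $B\ge5$, with $B=5$, $d=5$ handled by exact capacity. Second, the $d\ge7$ rows must match Propositions~\ref{prop:B4} and~\ref{prop:ternary}. Third, the audited solutions are counted as mirrors only through the converse direction of Theorem~\ref{thm:identity}, so no separate first-return check is needed. I would also spot-check one entry per cell, for example $197$ or $2010_3$, directly against the recurrence, to confirm the digit-indexing convention $a_k$ versus $x_i$.
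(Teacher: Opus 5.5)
Your assembly is correct and is how the paper obtains the Mirror Theorem, which it states without a separate proof. Clause (1) is Proposition~\ref{prop:d2}, (3) is Corollary~\ref{cor:rigidity}, (4)--(5) are Theorem~\ref{thm:support}, and (2) is Corollary~\ref{cor:35}, with the converse of Theorem~\ref{thm:identity} via Lemma~\ref{lem:firstreturn} making every audited digit solution a genuine mirror. One slip in your (redundant) $d=2$ overlap check: substituting $b=a(B-1)$ into $a(B-3)=4b$ gives $a(B-3)=4a(B-1)$, i.e.\ $3B=1$, so the two equations are incompatible for every $B\ge3$ and there is no $B=7$ case to dispose of; this incompatibility is exactly what the proof of Proposition~\ref{prop:d2} uses to show that branch (ii) seeds really have first return $6$.
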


\section{Cross-base triple alignment}

For a base-$B$ seed
\[
N=(x_1x_2\cdots x_d)_B,
\]
define the \emph{boundary digit sum}
\[
E_B(N):=x_1+x_d.
\]
A \emph{triple alignment} is a Keith mirror satisfying
\[
R(N)=C(N)=E_B(N).
\]
Since $R=C$ is already the mirror condition, the additional requirement is $E_B=R$.

\begin{proposition}[Triple-alignment extinction]\label{prop:triple-extinction}
Let $B\ge3$ and let $N=(x_1\cdots x_d)_B$ be a Keith mirror. If $E_B(N)=R(N)$, then $d\le3$. Consequently every triple alignment satisfies
\[
R=C=E_B\in\{4,6,8\}.
\]
\end{proposition}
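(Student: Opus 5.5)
The plan is a pure digit-bound argument. Since $x_1,x_d\le B-1$, we always have $E_B(N)\le 2B-2$. On the other side, Corollary~\ref{cor:saturation} forces $R\in\{2d,2d+2\}$, and for $d\ge3$ Corollary~\ref{cor:rigidity} gives $R=2d+2$. So $E_B=R$ requires $2d+2\le 2B-2$, that is, $B\ge d+2$.

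Combining this with the finite-support results leaves very little to check. By Theorem~\ref{thm:support}, orders $7\le d\le10$ force $B=3<d+2$, and $d\ge11$ is empty. Proposition~\ref{prop:low-support} leaves, for $d=4,5,6$, only the bases $3\le B\le6$, $3\le B\le4$ and $3\le B\le4$ respectively. The condition $B\ge d+2$ removes all of these except $(d,B)=(4,6)$. That cell is already listed as empty in the exact audit preceding Table~\ref{tab:mirrors}. Alternatively one can read off from the table that no order-$4$ mirror has base $6$: the order-$4$ entries lie in bases $3,4,5$, where $2B-2\le 8<10$. Hence no mirror of order $d\ge4$ satisfies $E_B=R$, proving $d\le3$.

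The value set follows from what remains. For $d=3$, Corollary~\ref{cor:rigidity} gives $R=8$, so $E_B=8$. For $d=2$, Proposition~\ref{prop:d2} gives $R\in\{4,6\}$. In every case $C=R$ by the mirror condition, so $R=C=E_B\in\{4,6,8\}$. (For completeness one may note that $d=2$ realizes $4$ via $(1\,(B-1))_B$ with $B=3$, and that $197$ realizes $8$, but the statement only asks for the inclusion.)

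The only real obstacle is making sure the cell $(4,6)$, the single survivor of the inequality $B\ge d+2$, is genuinely excluded. The paper asserts this cell is empty. If an independent argument is preferred, I would verify it by hand from the coefficients $(-28,-21,13,201)$. Capacity forces $a_3=1$, and $a_3=1$ with $a_1=a_0=5$ gives $201+13a_2=245$, which has no solution. Each unit reduction of $a_1$ or $a_0$ then shifts the target by $21$ or $28$, and a short finite check of these cases closes it.
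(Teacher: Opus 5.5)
Your argument is correct, but it takes a different route from the paper's proof. Both arguments begin the same way: $R=2d+2$ together with $x_1+x_d\le 2B-2$ gives $B\ge d+2$.

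\textbf{Your route.} You intersect $B\ge d+2$ with the finite support of Proposition~\ref{prop:low-support} and Theorem~\ref{thm:support}. This leaves only the cell $(d,B)=(4,6)$, which you dispose of through the audit or by hand.

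\textbf{The paper's route.} The paper never invokes the support theorem. It reads the mirror identity $N_B+2Y=(2^{d+1}-1)A$ with $N_B\ge B^{d-1}$ and $A<dB$, which gives $B^{d-2}<d\,2^{d+1}$. Combined with $B\ge d+2$, this yields $(d+2)^{d-2}<d\,2^{d+1}$. That inequality fails at $d=5$, and a ratio argument propagates the failure to all larger $d$. The paper then treats $d=4$ directly from the mirror equation with $x_1+x_4=10$, for every $B\ge6$, split into $B=6,\dots,9$, then $10\le B\le26$, then $B\ge27$.

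\textbf{What each buys.} Your route is shorter and makes triple extinction nearly a corollary of the classification. The capacity analysis has already done the infinite work; in particular, Proposition~\ref{prop:low-support} already removes $B\ge7$ at $d=4$. The paper's route is self-contained. It depends only on the mirror identity, not on the capacity bounds for $d=5,\dots,10$ or on the residual audit. It also gives a transparent growth reason for the extinction, namely $B^{d-2}$ against $d\,2^{d+1}$ under $B\ge d+2$. The price is a case-heavy treatment of $d=4$.

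\textbf{Two small remarks.}

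First, in the cell $(4,6)$ you need no finite check if you use the hypothesis you are working under. The condition $x_1+x_4=10$ with digits at most $5$ forces $x_1=5$, whereas capacity forces $x_1=a_3=1$. If you want the cell empty outright, reduce modulo $7$ with $a_3=1$. Since $28a_0+21a_1\equiv0$ and $201+13a_2\equiv5-a_2$, you get $a_2=5$. Then $4a_0+3a_1=38$, which exceeds the maximum $35$.

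Second, your parenthetical about $d=2$ is slightly off. The number $(1\,(B-1))_B$ has $E_B=B$, so realizing $E_B=R=4$ requires $B=4$, giving $(13)_4$, not $B=3$. This does not affect the proof, since only inclusion in $\{4,6,8\}$ is claimed.
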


\begin{proof}
Assume first that $d\ge3$. By Corollary~\ref{cor:rigidity},
\[
R=2d+2.
\]
Hence $x_1+x_d=2d+2$. Since both boundary digits are at most $B-1$,
\[
B\ge d+2. \tag{19}
\]
The mirror identity of Theorem~\ref{thm:identity} gives
\[
N_B+2Y=(2^{d+1}-1)A,
\]
with $Y\ge0$. Since $N_B\ge B^{d-1}$ and $A\le d(B-1)<dB$,
\[
B^{d-2}<d\,2^{d+1}. \tag{20}
\]
Combining (19) and (20) would require
\[
(d+2)^{d-2}<d\,2^{d+1}. \tag{21}
\]
At $d=5$ the reverse inequality already holds, because $7^3=343>320=5\cdot2^6$. Moreover, for
\[
Q_d:=\frac{(d+2)^{d-2}}{d2^{d+1}},
\]
one has
\[
\frac{Q_{d+1}}{Q_d}
=\frac{d(d+3)}{2(d+1)}
\left(\frac{d+3}{d+2}\right)^{d-2}>1
\]
for every $d\ge5$. Thus (21) fails for all $d\ge5$.

It remains to exclude $d=4$. Here $R=10$, so $x_1+x_4=10$ and $B\ge6$. The mirror equation becomes
\[
(B^3-15)x_1+(B^2-23)x_2+(B-27)x_3-28x_4=0. \tag{22}
\]
For $10\le B\le26$, using $x_1\ge1$, $x_4\le9$, $x_3\le B-1$, and the positivity of $B^2-23$, the left side is at least
\[
B^3+B^2-28B-240,
\]
which is $580$ at $B=10$ and increases thereafter. For $B\ge27$ every coefficient except the last is nonnegative, and
\[
B^3-15>28(B-1),
\]
so the contribution of $x_1\ge1$ already exceeds the largest possible negative contribution $28x_4\le28(B-1)$. For $B=6,7,8,9$, the endpoint condition forces $x_1\ge11-B$; minimizing the left side of (22) gives respectively
\[
760,\quad1024,\quad1162,\quad1060,
\]
all positive. Hence $d=4$ is impossible.
\end{proof}

\begin{corollary}[Eight cross-base triple alignments]\label{cor:triple-eight}
Across all bases $B\ge3$, exactly eight Keith mirrors satisfy $R=C=E_B$:
\[
(13)_4,
\]
\[
(42)_5,\quad(33)_7,\quad(24)_{11},\quad(15)_{23},
\]
\[
(444)_5,\quad(236)_7,\quad(197)_{10}.
\]
\end{corollary}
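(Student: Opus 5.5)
By Proposition~\ref{prop:triple-extinction}, every triple alignment has $d\le3$. The classification therefore splits into two parts: order $d=3$, handled with the finite table already proved, and order $d=2$, handled with the explicit families of Proposition~\ref{prop:d2}. For each part we impose the extra condition $x_1+x_d=R$ and list the survivors.

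For $d=3$, Corollary~\ref{cor:rigidity} gives $R=8$, so we need $x_1+x_3=8$. By Corollary~\ref{cor:35} and Theorem~\ref{thm:main}, the order-$3$ mirrors are exactly the $17$ strings in Table~\ref{tab:mirrors}. We scan them directly.
\begin{itemize}[label={}]
\item In base $5$, only $444$ has endpoint sum $8$; the others, such as $333$ and $241$, fall short.
\item In base $6$, $224$ gives $6$ and fails.
\item In base $7$, $236$ gives $8$, while $265$ gives $7$.
\item In base $8$, $134$ gives $5$ and $173$ gives $4$.
\item In base $9$, $116$ gives $7$ and $175$ gives $6$.
\item In base $10$, $197$ gives $8$.
\end{itemize}
This leaves exactly $(444)_5$, $(236)_7$ and $(197)_{10}$. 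Completeness rests entirely on the exactness of Table~\ref{tab:mirrors}, i.e.\ on Corollary~\ref{cor:35}.

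For $d=2$ we use Proposition~\ref{prop:d2}, treating its two branches separately.

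In branch (i), $R=4$ and $(a,b)=(1,B-1)$. The condition $a+b=4$ becomes $B=4$, which gives $(13)_4$.

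In branch (ii), $R=6$ and $a(B-3)=4b$, with $a+b=6$ required. If $B=3$, the solutions $(1,0)$ and $(2,0)$ have digit sums $1$ and $2$, so both fail. For $B>3$, substitute $b=6-a$ with $1\le a\le 5$ and $0\le b\le B-1$. This gives $a(B+1)=24$, so $B=24/a-1$. The admissible values are
\begin{itemize}[label={}]
\item $a=1$: $B=23$, $b=5$;
\item $a=2$: $B=11$, $b=4$;
\item $a=3$: $B=7$, $b=3$;
\item $a=4$: $B=5$, $b=2$.
\end{itemize}
The value $a=5$ is excluded because $24/5$ is not an integer. In each surviving case the digits satisfy $a,b\le B-1$ and $a\ge1$. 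Since these are solutions of $a(B-3)=4b$ inside the digit box, Proposition~\ref{prop:d2} certifies each as a mirror with first return $6$. This produces $(15)_{23}$, $(24)_{11}$, $(33)_7$ and $(42)_5$.

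Combining the two orders gives the eight listed numbers, and they are distinct. The values $R\in\{4,6,8\}$ agree with Proposition~\ref{prop:triple-extinction}.

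The only delicate points are the following:
\begin{itemize}[label={}]
\item the reliance on the completeness of the order-$3$ table;
\item checking that the order-$2$ first-return conditions hold, which Proposition~\ref{prop:d2} already guarantees;
\item confirming that no branch-(i) solution coincides with a branch-(ii) solution, which holds since $R$ differs.
\end{itemize}
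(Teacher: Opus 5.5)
Your proof is correct, and your order-$2$ analysis is essentially the paper's. You also dispose of $B=3$ in branch (ii) explicitly; the paper covers that case implicitly, since there $a(B+1)=24$ gives $a=6>B-1$.

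The real difference is at order $3$. You get completeness by scanning the $17$ order-$3$ entries of Table~\ref{tab:mirrors}. Your conclusion therefore depends on Corollary~\ref{cor:35}, that is, on the capacity-based support bound $3\le B\le 11$ together with the exhaustive residual enumeration.

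The paper avoids the table entirely. Using only $R=8$ and $K_8=7x+11y+13z$, it substitutes $z=8-x$ into $K_8=N_B$ and obtains
\[
x(B^2+5)+(B-11)y=96.
\]
This equation alone forces $B\le 10$: when $B\ge 11$ and $x\ge 1$, the left side is at least $126$. The condition $x+z=8$ with $x,z\le B-1$ forces $B\ge 5$. The six remaining bases are then solved by hand.

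Your route is shorter once the full classification is in place. The paper's route makes the triple-alignment count self-contained, without relying on the computer audit or the support theorem. It also gives an independent check of the three table entries with endpoint sum $8$.

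One small slip in your scan: in base $6$ you omit $112$. Its endpoint sum is $1+2=3$, so the conclusion is unaffected.
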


\begin{proof}
For $d=2$, Proposition~\ref{prop:d2} has two branches. On the $R=4$ branch, $(a,b)=(1,B-1)$ and $E_B=a+b=B$, so $E_B=R$ gives $B=4$ and $(13)_4$. On the $R=6$ branch,
\[
a(B-3)=4b,
\]
and $a+b=6$. Hence
\[
a(B+1)=24.
\]
The digit bounds leave exactly $a=1,2,3,4$, producing
\[
(15)_{23},\quad(24)_{11},\quad(33)_7,\quad(42)_5.
\]

For $d=3$, Proposition~\ref{prop:triple-extinction} gives $R=8$. Write $N=(xyz)_B$. Then $x+z=8$ and
\[
K_8=7x+11y+13z.
\]
The return equation $K_8=N_B$ is therefore equivalent to
\[
x(B^2+5)+(B-11)y=96. \tag{23}
\]
The endpoint condition forces $B\ge5$, while (23) is impossible for $B\ge11$. Checking the remaining integer bases $5\le B\le10$ under the digit bounds gives exactly
\[
(444)_5,\qquad(236)_7,\qquad(197)_{10}.
\]
Direct recurrence reconstruction verifies the stated first returns.
\end{proof}

The three possible triple-alignment profiles are
\[
1,2\mid2,1,\qquad
1,2,3\mid3,2,1,\qquad
1,2,3,4\mid4,3,2,1.
\]
Thus $1,2,3,4\mid4,3,2,1$ is the longest classical incidence mirror compatible with triple alignment. In particular, $(197)_{10}$ is unique in base ten but not isolated in the cross-base classification: it shares the terminal layer with $(444)_5$ and $(236)_7$.

\section{Structural interpretation: reconstruction across a central cut}

The incidence symmetry also admits a simple part--whole notation. Let an even full-window incidence profile through $m=2r$ be written as
\[
X=(c_1,\ldots,c_r\mid c_{r+1},\ldots,c_{2r})=:A\mid B.
\]
The vertical bar denotes only the known central cut. By Lemma~\ref{lem:profile},
\[
c_i=c_{2r-i+1},
\]
so
\[
B=\operatorname{rev}(A),\qquad X=A\mid\operatorname{rev}(A).
\]

\begin{proposition}[Fixed-cut reconstruction]\label{prop:cut-reconstruction}
For an even full-window incidence profile with its central cut fixed, either half uniquely determines the other. In particular, if $A$ is known as the complete left half, the unique compatible right half is $\operatorname{rev}(A)$.
\end{proposition}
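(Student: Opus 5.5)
The plan is to read the statement directly off the palindromic symmetry in Lemma~\ref{lem:profile}, treating it purely as a statement about vectors once the cut position $r=m/2$ is fixed. No Keith-number or base-dependent input is needed. The proof is therefore a short bookkeeping argument, organized in three steps.

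\textbf{Step 1.} Fix the cut: the even full-window profile has length $m=2r$, and $A=(c_1,\ldots,c_r)$, $B=(c_{r+1},\ldots,c_{2r})$. By Lemma~\ref{lem:profile}, $c_i=c_{2r-i+1}$ for all $1\le i\le 2r$. Setting $i=r+j$ with $1\le j\le r$ gives
\[
c_{r+j}=c_{r-j+1}.
\]
Hence the $j$-th entry of $B$ is the $(r-j+1)$-th entry of $A$, which says exactly $B=\operatorname{rev}(A)$.

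\textbf{Step 2.} Deduce uniqueness in both directions. Since $\operatorname{rev}$ is an involution on vectors of length $r$, a known $A$ forces $B=\operatorname{rev}(A)$. Symmetrically, a known $B$ forces $A=\operatorname{rev}(B)$. Any other candidate half would violate the identity $c_{r+j}=c_{r-j+1}$, which holds for every full-window incidence profile. So the compatible complement exists, namely the actual profile, and it is unique.

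\textbf{Step 3.} Make the hypotheses explicit, in line with the paper's interpretive remarks. The cut position $r$ must be known, since $A$ alone fixes its own length and hence $m=2r$. The global constraint used is the palindromic law of Lemma~\ref{lem:profile}.

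There is no genuine obstacle here. The only point requiring care is the index bookkeeping in Step~1, namely checking that $j\mapsto r-j+1$ maps $\{1,\ldots,r\}$ bijectively onto itself so that the reversal is exact. The second point is to note that "full-window" means the profile is the one given by Lemma~\ref{lem:profile} for the indices $1\le i\le m$, so that the symmetry applies to every entry.
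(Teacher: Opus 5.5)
Your proposal is correct and takes the same approach as the paper, whose proof is simply the symmetry $c_i=c_{2r-i+1}$ of Lemma~\ref{lem:profile} read across the fixed cut. Your substitution $i=r+j$, giving $c_{r+j}=c_{r-j+1}$, together with the involution remark, just writes out the index bookkeeping that the paper's one-line argument leaves implicit.
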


\begin{proof}
The claim is exactly the symmetry $c_i=c_{2r-i+1}$ from Lemma~\ref{lem:profile}, read across the fixed central cut.
\end{proof}

The qualification ``with its central cut fixed'' is essential. The bare prefix
\[
1,2,3,4
\]
is compatible with longer profiles such as
\[
1,2,3,4,5\mid5,4,3,2,1.
\]
Thus $1,2,3,4$ alone does not determine $4,3,2,1$. What determines the complement is the combination
\[
\text{visible data}+\text{known cut}+\text{global mirror law}
\quad\Longrightarrow\quad
\text{unique complement}.
\]
Equivalently, if $\mathcal R\subseteq\mathcal A\times\mathcal B$ denotes the admissible whole-object relation, exact reconstruction from an observed $a\in\mathcal A$ occurs when the completion fiber
\[
\mathcal R[a]:=\{b\in\mathcal B:(a,b)\in\mathcal R\}
\]
is a singleton. In the fixed-cut mirror class,
\[
\mathcal R[A]=\{\operatorname{rev}(A)\}.
\]
This fiber language is included only to state the informational consequence cleanly; no novelty is claimed for the abstract singleton criterion itself.

The terminal triple-alignment object realizes both forms of rigidity simultaneously:
\[
X=(1,2,3,4\mid4,3,2,1),
\]
so the unseen half is fixed by the mirror law, while Corollary~\ref{cor:triple-eight} imposes the separate equality
\[
C=E_B=R=8
\]
in the decimal member $197$. The reconstruction statement concerns the two halves of the incidence profile; triple alignment concerns observables taken from the center, the boundary digits, and the global return. The broader question of when a subsystem can infer the existence or structure of a larger whole from intrinsically available information is not resolved here.

\section{Discussion}

The paper is organized around one incidence-defined restriction and its arithmetic consequences. The incidence profile is geometric: it depends only on the sliding-window architecture, not on the digit values. Saturation of its center reduces every even candidate return to $2d$ or $2d+2$. The numerical dynamics then eliminate the earlier boundary, while the later boundary yields the explicit identity
\[
N_B+2Y=(2^{d+1}-1)A.
\]
The coefficient geometry of this identity forces finite support, after which an exact bounded audit gives the $35$ higher-order mirrors.

The deficit law clarifies the architecture of the argument. The quantity $\Delta_d(m)$ does not determine the seed, but it localizes the global index: a fixed deficit restricts $m$ to at most two affine boundaries, and the zero-defect case restricts it to the two saturation boundaries. The arithmetic dynamics then decide which localized boundary is realizable, the mirror identity constrains the digits, and the capacity inequality reduces realization to a finite classification. In this sense the proof follows the chain
\[
\text{structural defect}\;\Longrightarrow\;\text{return localization}\;\Longrightarrow\;\text{arithmetic realization}\;\Longrightarrow\;\text{finite classification}.
\]

The triple-alignment refinement returns to the motivating decimal example rather than introducing a separate recurrence model. Adding the boundary observable $E_B=x_1+x_d$ to the already classified mirror class forces $d\le3$ and leaves exactly eight cross-base objects. The terminal profile
\[
1,2,3,4\mid4,3,2,1
\]
is therefore not only the profile of $197$; it is the longest incidence mirror compatible with simultaneous center, boundary, and return alignment.

The part--whole interpretation is intentionally narrower. Once the central cut and the mirror model are known, either half determines the other. Without those assumptions, the same local data may admit multiple extensions. This makes the mirror profile a concrete arithmetic example of conditional reconstruction from partial structure, not a claim that arbitrary local observations determine a global object.

Computationally, only the final residual classification depends on exhaustive enumeration. The infinite support reduction is analytic. The accompanying verifier constructs, in every residual $(B,d)$ cell, both the set satisfying the closed Mirror identity and the set satisfying the original recurrence, first-return condition, and incidence definition; it asserts equality of the two sets and reproduces the published $35$ objects. Only integer arithmetic is used.

The standard Keith-number sources reviewed for this manuscript establish the Diophantine and computational background but do not, in the material located, isolate the incidence-saturation condition $C=R$ or the cross-base triple condition $R=C=E_B$. This supports the present framing but does not establish absolute historical priority. A broader bibliographic audit and independent line-by-line verification remain appropriate before journal submission.

\appendix
\section{Exact finite verification}

For reproducibility, the finite residual audit can be performed directly from Theorem~\ref{thm:identity}. For each pair $(B,d)$ surviving Proposition~\ref{prop:low-support}, enumerate
\[
(a_0,\ldots,a_{d-1})\in\{0,\ldots,B-1\}^d,
\qquad a_{d-1}>0,
\]
and retain exactly those satisfying
\[
\sum_{k=0}^{d-1}a_k\bigl(B^k+2^{k+1}-(2^{d+1}-1)\bigr)=0.
\]
Lemma~\ref{lem:firstreturn} guarantees that every retained vector is a genuine first-return Keith mirror. The largest residual box is ternary order $10$, containing only
\[
2\cdot3^9=39366
\]
leading-nonzero digit vectors.

The supplementary Python verifier computes the identity-based and definition-based solution sets separately in every residual cell and asserts that they agree. The file is \path{verify_keith_mirrors_v1_2.py}; its SHA-256 digest is
\begin{center}
\small\texttt{1003d33f3efce6eb6c0a704cbce832825f7d6468d62abebfdc4a9e1fe3644328}.
\end{center}
It computes the first return and the incidence profile directly from the original recurrence, returns exactly $35$ higher-order mirrors, and gives counts $17,5,2,2,4,0,3,2$ for $d=3,\ldots,10$. Thus the finite audit is exact and independently definition-checked; it is not used to establish the infinite support theorem.

\section*{Code and data availability}
The paper and its reproducibility materials are archived at Zenodo, \href{https://doi.org/10.5281/zenodo.22882054}{doi:10.5281/zenodo.22882054}. The arXiv ancillary directory accompanying this source contains the exact verifier \path{verify_keith_mirrors_v1_2.py}, its captured output, a short reproduction guide, and a SHA-256 manifest. Running
\begin{center}
\texttt{python3 verify\_keith\_mirrors\_v1\_2.py}
\end{center}
reproduces the $35$ higher-order mirrors and the order distribution reported in Corollary~\ref{cor:35}. The verifier uses the finite support proved in Theorem~\ref{thm:support}; it does not replace the analytic proof of that theorem.

\section*{Declaration on AI-assisted tools}
The author used AI-assisted tools during manuscript development for editorial revision, exploratory checking, and assistance in implementing an independent computational verification. All definitions, mathematical claims, proofs, citations, code, and conclusions were checked and accepted by the author, who assumes full responsibility for the submitted work.

\end{document}